\documentclass[12pt,a4paper,english,reqno]{amsart}
\usepackage[a4paper,footskip=1.5em]{geometry}
\usepackage{amsmath,amssymb,amsthm,mathtools}
\usepackage[mathscr]{euscript}
\usepackage[usenames,dvipsnames]{color}
\usepackage{adjustbox,tikz,calc,graphics,babel,standalone}
\usepackage{subcaption}
\usepackage{csquotes,enumerate,verbatim}
\usepackage[final]{microtype}
\usepackage[numbers]{natbib}
\usetikzlibrary{shapes.misc,calc,intersections,patterns,decorations.pathreplacing}
\usepackage{hyperref}
\hypersetup{colorlinks=true,linkcolor=blue,citecolor=blue,pdfpagemode=UseNone,pdfstartview={XYZ null null 1.00}}
\usepackage{cmtiup}

\pagestyle{plain}
\linespread{1.2}
\setlength{\parskip}{3pt}

\theoremstyle{plain}
\newtheorem*{theorem*}{Theorem}
\newtheorem{theorem}{Theorem}[section]
\newtheorem{lemma}[theorem]{Lemma}
\newtheorem{claim}[theorem]{Claim}
\newtheorem{proposition}[theorem]{Proposition}
\newtheorem*{claim*}{Claim}

\newtheorem{conjecture}[theorem]{Conjecture}

\theoremstyle{remark}

\def\N{\mathbb{N}}
\def\Z{\mathbb{Z}}

\def\C{\mathcal}

\DeclareMathOperator\Deg{d}
\DeclareMathOperator\wdeg{\zeta}

\DeclareMathOperator{\EV}{\mathbb{E}}
\DeclareMathOperator{\PV}{\mathbb{P}}

\let\eps\varepsilon

\let\originalleft\left
\let\originalright\right
\renewcommand{\left}{\mathopen{}\mathclose\bgroup\originalleft}
\renewcommand{\right}{\aftergroup\egroup\originalright}

\makeatletter
\def\imod#1{\allowbreak\mkern10mu({\operator@font mod}\,\,#1)}
\makeatother

\begin{document}
	
	\title{Ramsey graphs induce subgraphs of many different sizes}
	
	\author{Bhargav Narayanan}
	\address{Department of Pure Mathematics and Mathematical Statistics, University of Cambridge, Wilberforce Road, Cambridge CB3\thinspace0WB, UK}
	\email{b.p.narayanan@dpmms.cam.ac.uk}
	
	\author{Julian Sahasrabudhe}
	\address{Department of Mathematical Sciences, University of Memphis, Memphis TN 38152, USA}
	\email{julian.sahasra@gmail.com}
	
	\author{Istv\'{a}n Tomon}
	\address{Department of Pure Mathematics and Mathematical Statistics, University of Cambridge, Wilberforce Road, Cambridge CB3\thinspace0WB, UK}
	\email{i.tomon@dpmms.cam.ac.uk}
	
	\date{28 March 2016}
	\subjclass[2010]{Primary 05D10; Secondary 05C35}
	
	\begin{abstract}
		A graph on $n$ vertices is said to be \emph{$C$-Ramsey} if every clique or independent set of the graph has size at most $C \log n$. The only known constructions of Ramsey graphs are probabilistic in nature, and it is generally believed that such graphs possess many of the same properties as dense random graphs. Here, we demonstrate one such property: for any fixed $C>0$, every $C$-Ramsey graph on $n$ vertices induces subgraphs of at least $n^{2-o(1)}$ distinct sizes. This near-optimal result is closely related to two unresolved conjectures, the first due to Erd\H{o}s and McKay and the second due to Erd\H{o}s, Faudree and S\'{o}s, both from 1992.
	\end{abstract}
	
	\maketitle
	
	\section{Introduction}
	A subset of the vertices of a graph is called \emph{homogeneous} if it induces either a clique or an independent set. Graphs with no large homogeneous sets are central objects in graph Ramsey theory and the properties of such graphs have been investigated by many researchers over the last sixty years. Erd\H{o}s and Szekeres~\citep{lowerramsey} proved a quantitative form of Ramsey's foundational result~\citep{Ramsey1930} and showed that every graph on $n$ vertices contains a homogeneous set of size at least $(\log n)/2$ and subsequently, Erd\H{o}s~\citep{upperramsey} used probabilistic techniques to show the existence of an $n$-vertex graph with no homogeneous sets of size greater than $2\log n$; here, and throughout the paper, all logarithms are base $2$. It is generally believed that graphs containing no large homogeneous sets should resemble random graphs; indeed, it is worth noting that in spite of considerable effort, see~\citep{fw, bip_const} for example, all known deterministic constructions of graphs with small homogeneous sets are substantially weaker than the original construction of Erd\H{o}s.
	
	Let $\hom(G)$ denote the size of the largest homogeneous set of a graph $G$. For a positive constant $C>0$, we say that a graph $G$ on $n$ vertices is \emph{$C$-Ramsey} if $\hom(G) \le C\log n$. In addition to the lack of deterministic constructions, the intuition that Ramsey graphs are `random-like' is also supported by rigorous results, proved over the course of the last forty years, that show that such graphs share various properties with dense random graphs. One of the first such results is due to Erd\H{o}s and Szemer\'{e}di~\citep{dense} who proved that the edge density of Ramsey graphs must be bounded away from both $0$ and $1$. Examples of more recent results include a theorem of Shelah~\citep{shelah} that asserts that every $C$-Ramsey graph on $n$ vertices contains $2^{\delta n}$ non-isomorphic induced subgraphs, and a theorem due to Pr\"{o}mel and R\"{o}dl~\citep{universal} that asserts that every $n$-vertex $C$-Ramsey graph contains an induced copy of all graphs on at most $\delta \log n $ vertices; in both cases, $\delta > 0$ is a constant that depends only on $C$.
	
	In this paper, we investigate the set of sizes of induced subgraphs of a Ramsey graph. For a graph $G$, writing $v(H)$ and $e(H)$ respectively for the number of vertices and edges of a graph $H$, let
	\[ \Phi(G)=\{e(H): H \text{ is an induced subgraph of }G \}\]
	and let
	\[ \Psi(G)=\{(v(H), e(H)): H \text{ is an induced subgraph of }G \}.\]
	If $G$ is a $C$-Ramsey graph on $n$ vertices, then it is conjecturally believed that the sets $\Phi(G)$ and $\Psi(G)$ behave like the sets $\Phi(\tilde G)$ and $\Psi(\tilde G)$, where $\tilde G \sim G(n,p)$ is a typical dense random graph on $n$ vertices with an appropriately chosen edge density $p = p(C)>0$. We describe two conjectures that make this idea precise below.
	
	First, Erd\H{o}s and McKay~\citep{erdos1,erdos2} conjectured that if $G$ is a Ramsey graph, then $\Phi(G)$ must be large in the following strong sense.
	\begin{conjecture}\label{strongconj}
		For any $C>0$, there exists a $\delta = \delta(C) > 0$ such that
		\[ \{0, 1, \dots, \delta n^2 \} \subset \Phi(G)\]
		for every $C$-Ramsey graph  $G$ on $n$ vertices.
	\end{conjecture}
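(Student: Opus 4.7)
The plan is to prove the conjecture via a ``continuity'' argument on edge counts: starting from the empty induced subgraph and adjoining or exchanging one vertex at a time, I would construct a sequence of induced subgraphs $H_0, H_1, \ldots, H_m$ of $G$ with $e(H_0) = 0$ and $e(H_{i+1}) = e(H_i) + 1$ at every step, continuing until any prescribed target $m \le \delta n^2$ is reached.

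The central ingredient would be a \emph{local modification lemma}: for every induced subgraph $H = G[S]$ with $e(H) < \delta n^2$, there is an induced subgraph $H'$ obtained from $H$ by a single vertex swap (or addition or removal) such that $e(H') = e(H) + 1$. In the swap case, replacing $u \in S$ by $v \notin S$ alters the edge count by exactly $d_S(v) - d_S(u) - \mathbf{1}[uv \in E(G)]$, where $d_S(x) = |N(x) \cap S|$; thus the task reduces to showing that the joint distribution of the values $d_S(\cdot)$ over $S$ and over $V(G) \setminus S$ is rich enough that the increment $+1$ is realized by some admissible pair $(u,v)$. To secure this richness I would use the Ramsey hypothesis directly: no neighborhood pattern into $S$ can be shared by more than $C \log n$ vertices, which forces $\Omega(n/\log n)$ distinct $d_S$-values on each side. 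A quantitative Bollob\'as--Thomason-type regularity result, tailored to Ramsey graphs, should then ensure that each value is attained by roughly the ``right'' number of vertices.

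The hard part will be ruling out a parity obstruction --- and more generally guaranteeing that the specific increment $+1$, rather than only some convenient multiple, is available at every stage. A plausible route is to allow two-vertex swaps, doubling the degrees of freedom and enabling a Cauchy--Davenport-style sumset argument on the set of achievable increments. One must also show that the process can be driven all the way to $m$ without getting trapped at some ``dead-end'' configuration where no single-step move works, which will require choosing swaps so as to preserve a reservoir of useful exchanges. The crux --- and the reason this conjecture has resisted attack since 1992 --- is that the Ramsey property controls homogeneous sets only qualitatively, whereas $\pm 1$ increments demand very fine control on the joint degree distribution; any successful proof must extract considerably more structure from the Ramsey hypothesis than the mere absence of large cliques and independent sets.
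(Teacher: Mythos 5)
You should first be clear that the statement you are attacking is Conjecture~\ref{strongconj}, the Erd\H{o}s--McKay conjecture, which the paper explicitly leaves open: the paper proves only the much weaker Theorem~\ref{mainthm} (that $\Phi(G)$ has \emph{many} elements, $n^{2-o(1)}$ of them), and the strongest known progress on the interval statement is the Alon--Krivelevich--Sudakov result giving only $\{0,1,\dots,n^{\delta}\}\subset\Phi(G)$. So there is no proof in the paper to match yours against, and your text is in any case a programme rather than a proof --- you yourself defer the parity obstruction, the ``dead-end'' problem, and the existence of a suitable regularity input.

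The concrete gaps are these. First, your ``local modification lemma'' is exactly the open crux, and the support you offer for it does not hold: the Ramsey hypothesis does \emph{not} imply that at most $C\log n$ vertices share a neighbourhood pattern into a fixed set $S$. Vertices that are twins with respect to $S$ need not induce a clique or an independent set among themselves; the most you can extract is that a twin class of size $t$ contains a homogeneous set of size about $\tfrac12\log t$, which only bounds $t$ by roughly $n^{2C}$ --- vacuous. Moreover, even many distinct neighbourhood patterns into $S$ would not yield many distinct values of $d_S(\cdot)$, and it is distinct (indeed, suitably equidistributed) degrees that your swap argument needs; the appeal to an unspecified ``Bollob\'as--Thomason-type regularity result tailored to Ramsey graphs'' is wishful, since no such quantitative joint-degree control is known (this is precisely why the conjecture has resisted attack). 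Second, even granting rich degree sets on both sides of the partition, realizing the increment exactly $+1$ at every one of $\Theta(n^2)$ steps, while maintaining a reservoir of admissible swaps, is not addressed; a Cauchy--Davenport-style sumset heuristic for two-vertex swaps gives no control over which specific integers occur as increments, only over how many. By contrast, the paper sidesteps exactly this difficulty: it never tries to hit prescribed sizes, but instead uses diversity (Bukh--Sudakov), random vertex subsets, anticoncentration for hypergeometric differences, and Tur\'an-type arguments to show that many \emph{distinct} sizes occur, which is why it proves Theorem~\ref{mainthm} and not Conjecture~\ref{strongconj}. As it stands, your proposal does not constitute a proof, and its key supporting claim is false as stated.
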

	
	Next, Erd\H{o}s, Faudree and S\'{o}s~\citep{erdos1,erdos2} made a similar conjecture that if $G$ is a Ramsey graph, then $\Psi(G)$ must also be large.
	\begin{conjecture}\label{weakconj}
		For any $C>0$, there exists a $\delta = \delta(C) > 0$ such that \[|\Psi(G)| \ge \delta n^{5/2}\] for every $C$-Ramsey graph  $G$ on $n$ vertices.
	\end{conjecture}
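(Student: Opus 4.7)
The plan is to reduce Conjecture~\ref{weakconj} to a per-slice estimate. For $0 \le k \le n$, let
\[ \Phi_k(G) = \{e(H) : H \text{ is an induced subgraph of } G \text{ with } v(H) = k\}, \]
so that $|\Psi(G)| = \sum_{k} |\Phi_k(G)|$. It suffices to prove that $|\Phi_k(G)| \ge \delta n^{3/2}$ for a linear-sized set of values $k \in [n/4, 3n/4]$; summing then yields $|\Psi(G)| = \Omega(n^{5/2})$. The bound $n^{3/2}$ per slice matches the behaviour of $G(n,1/2)$: with $\binom{n}{k} = 2^{\Theta(n)}$ available $k$-subsets, the edge counts in the corresponding induced subgraphs span an interval of length $\Theta(n^{3/2})$ for each $k$ of linear size.

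To establish the per-slice bound I would combine the Erd\H{o}s--Szemer\'edi density theorem with a vertex-swap argument. The density theorem forces the edge count of a typical $k$-subset to lie near $p\binom{k}{2}$, where the edge density $p$ of $G$ is bounded inside some interval $[c_1, c_2] \subset (0,1)$ depending only on $C$. To spread $\Phi_k(G)$ across a wide interval around this central value, fix a base $k$-vertex induced subgraph $H$ and consider swaps: replacing $u \in V(H)$ by $v \notin V(H)$ shifts the edge count by $\deg_{G[V(H) \setminus \{u\}]}(v) - \deg_{G[V(H) \setminus \{u\}]}(u) \pm \mathbf{1}[uv \in E(G)]$. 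A single swap produces at most $\Theta(n)$ distinct shifts, since the relevant degrees are integers in $[0,k]$, so reaching the full $\Theta(n^{3/2})$ range requires composing roughly $\Theta(\sqrt{n})$ disjoint swaps.

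The principal obstacle is controlling these composed swaps. The edge-count shift produced by disjoint simultaneous swaps $u_1 \to v_1, \dots, u_s \to v_s$ is \emph{not} the sum of the individual shifts: each pair of swaps contributes a correction of $\pm 1$ for every edge between $\{u_i, v_i\}$ and $\{u_j, v_j\}$, and these corrections correlate via the fine structure of the adjacency matrix of $G$. To guarantee that composed swaps produce genuinely \emph{distinct} edge counts rather than edge counts crowded into a wide interval, one seems to need a quantitative dispersion statement for the degree sequences of small induced subgraphs of a $C$-Ramsey graph at the scale $n^{3/2}$ --- an analogue of what holds in $G(n,1/2)$. Such a structural result appears to be essentially as hard as Conjecture~\ref{strongconj} itself, and I expect this to be the point at which genuine new input would be required to settle Conjecture~\ref{weakconj}.
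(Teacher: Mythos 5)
There is no proof of this statement to compare against: in the paper this is Conjecture~\ref{weakconj}, the Erd\H{o}s--Faudree--S\'os conjecture, and it is left open. The paper only records the partial results $|\Psi(G)| = \Omega(n^{2})$ (Alon--Kostochka) and $|\Psi(G)| = \Omega(n^{2.369})$ (Alon--Balogh--Kostochka--Samotij), and its own Theorem~\ref{mainthm} concerns the different quantity $|\Phi(G)|$, for which it proves the bound $n^{2-\eps}$ --- well short of $\delta n^{5/2}$ for $\Psi$. So your proposal should be judged as an attempted proof of an open problem, and by your own account it is not one: the argument is conceded to be incomplete at exactly the step that matters.

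Concretely, the gap is the dispersion statement you name at the end. The slice decomposition $|\Psi(G)| = \sum_k |\Phi_k(G)|$ and the target $|\Phi_k(G)| \gtrsim n^{3/2}$ for linearly many $k$ is a sensible reduction (and is in the spirit of the paper's machinery: Theorem~\ref{secondstep} also works level-by-level in $v(H)$ and generates distinct sizes via anticoncentration, Proposition~\ref{bineq}), but everything hinges on showing that the $\Theta(\sqrt{n})$-fold composed swaps produce $\Theta(n^{3/2})$ genuinely distinct edge counts rather than many collisions, and you give no mechanism for that. Note how the paper sidesteps this difficulty in its weaker setting: the separation between different subsets is bought by an \emph{injective} weight function $\omega$ with range of size $n^{1+\delta}$ (the distinct degrees into a large external set $U$ supplied by Theorem~\ref{firststep}), which is what drives Claim~\ref{badpairs} and the compatibility analysis. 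When one bounds $|\Phi_k(G)|$ directly inside $G$, no such external degree separation is available, and replacing it requires a quantitative anticoncentration result for edge counts of $k$-subsets of Ramsey graphs at scale $n^{3/2}$ that neither you nor the paper proves. Without that input, the proposal establishes nothing beyond the previously known bounds, so the conjecture remains untouched by it.
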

	
	Towards Conjecture~\ref{weakconj}, Alon and Kostochka~\citep{pairs2} proved that $|\Psi(G)| = \Omega(n^{2})$ for every $n$-vertex Ramsey graph $G$. This result was subsequently improved by Alon, Balogh, Kostochka and Samotij~\citep{pairs1} who showed that if $G$ is a Ramsey graph on $n$ vertices, then $|\Psi(G)| = \Omega(n^{2.369})$.
	
	In comparison to Conjecture~\ref{weakconj}, our understanding of Conjecture~\ref{strongconj} is quite poor. The best result in the direction of this conjecture is due to Alon, Krivelevich and Sudakov~\citep{interval} who proved that for any $C>0$, there exists a $\delta = \delta(C) > 0$ such that if $G$ is a $C$-Ramsey graph on $n$ vertices, then $\{0,1, \dots, n^{\delta}\} \subset \Phi(G)$.
	
	In the light of Conjecture~\ref{strongconj}, one would expect that $|\Phi(G)|=\Omega(n^{2})$ for every $n$-vertex Ramsey graph $G$. However, even this weakening of Conjecture~\ref{strongconj} is not known. Indeed, while it follows immediately from the aforementioned result of Alon, Balogh, Kostochka and Samotij that $|\Phi(G)| = \Omega(n^{1.369})$ for any $n$-vertex Ramsey graph $G$, we are unaware of any other nontrivial bounds. In this paper, we fill this void with the following near-optimal result.
	\begin{theorem}\label{mainthm}
		Let $C,\eps>0$ be positive real numbers. If $n\in\N$ is sufficiently large, then \[|\Phi(G)| \ge n^{2-\eps}\]
		for every $C$-Ramsey graph $G$ on $n$ vertices.
	\end{theorem}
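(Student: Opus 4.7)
The strategy is to mimic a typical $\tilde{G}\sim G(n,1/2)$, for which induced-subgraph edge counts span $\Theta(n^{2})$ distinct values by concentration, and to extract the analogous behaviour from a $C$-Ramsey graph $G$. I would seek a collection of ``dial'' vertices $u_{1},\dots,u_{t}\in V(G)$ whose degrees into a fixed large subset $A\subseteq V(G)\setminus\{u_{1},\dots,u_{t}\}$ form an approximate arithmetic progression; the subset sums of these degrees would then furnish many distinct edge counts of the graphs $G[A\cup S]$ as $S$ ranges over subsets of $\{u_{1},\dots,u_{t}\}$.

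The first main step is a \emph{degree-diversity lemma}: in any $C$-Ramsey graph $G$, there exist disjoint $A,U\subseteq V(G)$ with $|A|\geq n^{1-o(1)}$ and $t=n^{1-\varepsilon/4}$ vertices $u_{1},\dots,u_{t}\in U$ whose degrees $d_{A}(u_{i})$ form a near arithmetic progression with common difference $g\leq n^{o(1)}$. This should be provable by iterative refinement: at each stage one sorts the remaining vertices into degree buckets of width $g$ and passes to a subset of $A$ that preserves the Ramsey property, using the fact that a Ramsey graph cannot admit too many vertices of nearly equal degree without containing a large homogeneous set. Given such a structure, for every $S\subseteq\{u_{1},\dots,u_{t}\}$ the induced subgraph on $A\cup S$ has edge count
\[
e(G[A])+\sum_{u_{i}\in S}d_{A}(u_{i})+e(G[S]),
\]
and the classical fact that subset sums of $\{g,2g,\dots,tg\}$ hit every multiple of $g$ in $[0,g\binom{t+1}{2}]$ yields, \emph{ignoring} the error term, at least $\binom{t+1}{2}=\Theta(n^{2-\varepsilon/2})$ distinct edge counts, comfortably more than the required $n^{2-\varepsilon}$.

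The hard part will be controlling the error term $e(G[S])$. Since $\{u_{1},\dots,u_{t}\}$ is a subset of the Ramsey graph it cannot be taken homogeneous, and for $|S|$ as large as $t=n^{1-\varepsilon/4}$ the term $e(G[S])$ can be of order $\Theta(t^{2})=\Theta(n^{2-\varepsilon/2})$, which is comparable to the full spread of the main term. I would attack this by strengthening the diversity lemma to additionally control the internal structure of $U$: either by making $G[U]$ approximately regular, so that $e(G[S])$ becomes an approximately linear function of $|S|$ that can be absorbed into the main term, or by restricting to sparse $S$ of size $n^{o(1)}$ and iterating the construction across many disjoint copies of $A$ to recover the missing factor. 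Simultaneously securing bipartite degree-diversity between $A$ and $U$ together with sufficient internal regularity on $U$---presumably via a combination of dependent random choice on the bipartite side and Ramsey-type refinements on pairs of vertices in $U$---is the technical heart of the argument, and if necessary the final polishing to consecutive integers could be supplied by the Alon--Krivelevich--Sudakov theorem, which provides $\{0,1,\dots,n^{\delta}\}\subseteq\Phi(G)$ for some $\delta=\delta(C)>0$.
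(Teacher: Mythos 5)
There is a genuine gap, and it is located exactly where you place the main weight of the argument: the proposed ``degree-diversity lemma'' is false as stated, even for the benchmark case $G\sim G(n,1/2)$ (which is $C$-Ramsey for any $C>2$ with high probability). In $G(n,1/2)$, for any fixed set $A$ of size $n^{1-o(1)}$, every vertex outside $A$ has degree into $A$ within $O(\sqrt{|A|\log n})$ of $|A|/2$, so the degrees $\deg_A(u)$ take at most $n^{1/2+o(1)}$ distinct values; in particular one cannot find $t=n^{1-\varepsilon/4}$ vertices with pairwise distinct degrees into $A$, let alone degrees forming a near arithmetic progression. The heuristic you invoke to prove the lemma --- that ``a Ramsey graph cannot admit too many vertices of nearly equal degree without containing a large homogeneous set'' --- is also false, again by the example of the random graph, where \emph{all} degrees are nearly equal. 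This matters quantitatively and not just technically: any argument that produces distinct sizes only from subset sums of the bipartite degrees $\deg_A(u_i)$ is capped at roughly $t^2$ values with $t\lesssim n^{1/2+o(1)}$, i.e.\ at about $n^{1+o(1)}$ sizes, an order $n$ short of the target $n^{2-\varepsilon}$. Your fallback repairs do not close this gap either: making $G[U]$ approximately regular does not make $e(G[S])$ a function of $|S|$ up to error $n^{o(1)}$ (for $|S|=k$ the fluctuation of $e(G[S])$ across $k$-sets is polynomial in $k$, swamping a common difference $g\le n^{o(1)}$), and restricting to $|S|\le n^{o(1)}$ while iterating over the at most $n^{o(1)}$ disjoint copies of a set of size $n^{1-o(1)}$ yields far fewer than $n^{2-\varepsilon}$ sizes.

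For comparison, the paper accepts the $n^{1/2}$ barrier for distinct degrees and compensates in two ways. First, a single ``scale'' is not asked to produce $n^{2}$ sizes: for each of about $n^{1/2}$ target values $m$, a random vertex subset $U$ is found with $e(G[U])$ within $O(n^{3/2})$ of $m$ together with a set $W$ of about $n^{1/2-\varepsilon}$ vertices having distinct degrees into $U$, all lying in a short interval far from $0$; the resulting size ranges for different $m$ are disjoint. Second, within one scale the error term $e(G[Z])$ is not tamed by arithmetic structure at all: one passes to the weighted graph $G[W]$ with weights $\omega(v)=\deg_U(v)$ and proves (Theorem~\ref{secondstep}) that $|\Psi(G[W],\omega)|\ge |W|^{3-o(1)}$, using anti-concentration for hypergeometric random variables together with diversity (Theorem~\ref{diversity}), and using the largeness of the weights relative to their spread to separate subsets of different cardinalities. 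This yields about $n^{3/2-o(1)}$ sizes per scale and hence $n^{2-o(1)}$ overall --- precisely the factor your subset-sum scheme cannot reach. If you want to salvage your approach, you would need both a multi-scale device of this kind and a replacement for the arithmetic-progression hypothesis by a distributional (anti-concentration) statement, at which point you are essentially reconstructing the paper's proof.
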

	
	The main ingredient in the proof of this theorem is a lower bound on the cardinality of (an appropriate generalisation of) the set $\Psi$ for `vertex-weighted' Ramsey graphs; this might be of independent interest.
	
	This paper is organised as follows. In Section~\ref{prelim}, we set out some notation and collect together a few useful facts. We give an overview of our approach in Section~\ref{sketch}, and the proof of Theorem~\ref{mainthm} proper in Section~\ref{proof}. We discuss some problems and conclude this note in Section~\ref{conc}. For the sake of clarity of presentation, we systematically omit floor and ceiling signs whenever they are not crucial.
	
	\section{Preliminaries}\label{prelim}
	In this short section, we introduce some notation and collect together some facts that we shall make use of in the sequel.
	
	Our notation is mostly standard. As usual, given a graph $G=(V,E)$, we write $v(G)$ and $e(G)$ respectively for the number of vertices and edges of $G$, and we call the ratio
	\[e(G)\binom{v(G)}{2}^{-1}\]
	the \emph{edge density of $G$}. For $U\subset V$, we denote by $G[U]$ the subgraph of $G$ induced by $U$. For a vertex $x \in V$, we denote its neighbourhood by $\Gamma(x)$ and write $\Deg(x)=|\Gamma(x)|$ for its degree; also, for $U \subset V$, let $\Gamma_{U}(x)=\Gamma(x)\cap U$ and $\Deg_{U}(x)=|\Gamma_{U}(x)|$.
	
	A quick disclaimer about the word `size' is perhaps in order. Since this paper is concerned primarily with subgraph sizes, let us make it explicit that the \emph{size of a graph $G$} is nothing but the quantity $e(G)$; of course, we shall also use the word `size' to denote the cardinality of a finite set but the precise meaning will always be clear from the context.
	
	The following classical result of Erd\H{o}s and Szemer\'{e}di~\citep{dense} about the edge densities of Ramsey graphs will prove to be useful.
	\begin{theorem}\label{density}
		The edge density of every $C$-Ramsey graph lies in the interval $[\delta, 1-\delta]$, where $0 < \delta \le 1/2$ is a constant depending on $C$ alone. \qed
	\end{theorem}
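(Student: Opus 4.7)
The statement is symmetric under complementation, since $\bar G$ is also $C$-Ramsey and has density $1 - e(G)\binom{n}{2}^{-1}$; it therefore suffices to establish the lower bound on $G$'s density. I will prove the contrapositive: for $\delta = \delta(C) > 0$ chosen small enough, any $n$-vertex graph $G$ with $e(G) < \delta\binom{n}{2}$ contains an independent set of size greater than $C\log n$, violating the $C$-Ramsey assumption.

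The natural first attempt is a greedy iteration. Set $G_0 = G$ and $I_0 = \varnothing$; at step $i$, pick a vertex $v_i$ of minimum degree in $G_i$ (with degree at most the average $\delta_i(n_i - 1)$, where $n_i$ and $\delta_i$ denote the order and density of $G_i$), put $I_{i+1} = I_i \cup \{v_i\}$, and let $G_{i+1} = G_i - \Gamma_{G_i}(v_i) - \{v_i\}$. Each $I_k$ is an independent set of size $k$ in $G$, and the bound $e(G_i) \le e(G)$ keeps the density $\delta_i$ close to $\delta$ provided $n_i$ stays close to $n$. Unfortunately, this alone only yields an independent set of size $O(1/\delta)$—a constant in $n$—so an additional ingredient is needed.

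The plan is to interlace the greedy process with an appeal to Erd\H{o}s--Szekeres. If after $k_0$ rounds the remaining graph $G_{k_0}$ still has $n_{k_0} > n^{2C}$ vertices, then $G_{k_0}$ contains a homogeneous set of size greater than $C\log n$; since cliques of $G_{k_0}$ are cliques of $G$ (hence of size at most $C\log n$), this homogeneous set must in fact be an independent set $J$. Because $J$ is disjoint from $I_{k_0}$ and has no edges to it, $I_{k_0} \cup J$ is an independent set in $G$ of size exceeding $C\log n$, the contradiction we seek. The main obstacle is quantitative: one must sustain the iteration long enough for the analysis to close while keeping $n_{k_0}$ above $n^{2C}$. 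For $C < 1/2$ the bound already holds at $k_0 = 0$; for $C \ge 1/2$, one must exploit the simultaneous smallness of $\omega(G)$ and $\alpha(G)$ via a finer argument---such as a quantitative Goodman-type bound on monochromatic triangles, or Kruskal--Katona estimates---that forces density from below once both the clique number and the independence number are bounded. Calibrating $\delta = \delta(C)$ so that this combined argument closes is the technical heart of the proof.
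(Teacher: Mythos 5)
Your reduction by complementation is fine, but the core of the argument is missing, and the mechanism you propose cannot close. By the Erd\H{o}s--Szekeres bound (quoted in the introduction of the paper), every $n$-vertex graph satisfies $\hom(G)\ge (\log n)/2$, so $C$-Ramsey graphs exist only for $C\ge 1/2$; hence the one case your interlacing scheme actually handles, $C<1/2$, is vacuous, while for the genuine case $C\ge 1/2$ the requirement $n_{k_0}>n^{2C}\ge n$ can never be met. At that point you defer everything to an unspecified ``finer argument'' via Goodman-type or Kruskal--Katona estimates, but that deferral is the entire content of the theorem: Goodman's identity controls triangle counts and yields no lower bound on the edge density from $\omega(G),\alpha(G)\le C\log n$ alone, and no route is indicated by which Kruskal--Katona would do so. Your diagnosis of the greedy process is accurate --- the minimum-degree vertex can have degree of order $\delta n$, so the plain min-degree iteration stalls at an independent set of size $O(1/\delta)$ --- but nothing in the proposal repairs this, so what you have is a plan rather than a proof.

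What is actually needed (and what Erd\H{o}s and Szemer\'edi prove, which is why the paper states the result with a citation and \qed rather than a proof) is a \emph{sparse Ramsey theorem}: a graph of edge density $\eps$ contains a homogeneous set of size $\Omega\left(\frac{\log n}{\eps\log(1/\eps)}\right)$. The known argument is a biased form of the Erd\H{o}s--Szekeres recursion rather than a one-sided greedy: at each step one branches either into the non-neighbourhood of a low-degree vertex (extending the independent set and, thanks to sparsity, losing only an $O(\eps')$-fraction of the current vertex set, where $\eps'$ is the current density) or into a neighbourhood (extending the clique, which is expensive but can happen at most $\omega(G)\le C\log n$ times); the bookkeeping that balances the two branch types --- including controlling how the density drifts as one passes to subgraphs --- is precisely the quantitative heart you label but do not supply. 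Choosing $\eps=\delta(C)$ small enough then makes the resulting homogeneous set exceed $C\log n$, giving the contradiction. (Minor point: once $J$ with $|J|>C\log n$ is found, the union with $I_{k_0}$ is superfluous --- $J$ alone already violates $C$-Ramseyness.)
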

	
	For $c,\delta \in (0,1)$, an $n$-vertex graph $G$ is said to be \emph{$(c,\delta)$-diverse} if for each vertex $x\in V$, we have $|\Gamma(x) \triangle \Gamma(y)| \ge cn$ for all but at most $n^{\delta}$ vertices $y\in V$.  A Ramsey graph may not itself be diverse; however, the following result due to Bukh and Sudakov~\citep{diffdegrees} tells us that every Ramsey graph contains a large induced subgraph that is diverse.
	
	\begin{theorem}\label{diversity}
		For any $C, \delta > 0$, there exists a constant $c>0$ such that the following holds for all $n \in \N$. Every $C$-Ramsey graph on $n$ vertices contains a $(c,\delta)$-diverse induced subgraph on at least $cn$ vertices. \qed
	\end{theorem}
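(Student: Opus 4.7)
My approach is to locate the desired induced subgraph through an auxiliary \emph{similarity graph} that records pairs of vertices of $G$ with overly similar neighbourhoods. For a suitably small constant $c_0 = c_0(C, \delta) > 0$, I define the graph $H$ on $V(G)$ by placing an edge between $x$ and $y$ whenever $|\Gamma(x) \triangle \Gamma(y)| < c_0 n$. The first goal is to show that the set of vertices with $H$-degree at least $n^{\delta/2}$ has size at most $n/2$; discarding these exceptional vertices then yields a set $U_0 \subseteq V(G)$ of size $n/2$ in which every vertex has $H$-degree at most $n^{\delta/2}$.

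Once $U_0$ is in hand, I would pass to a uniformly random subset $U \subseteq U_0$ of size $c_1 n$ for an appropriate $c_1 = c_1(C, \delta) > 0$. Two concentration estimates (Chernoff-type bounds for hypergeometric distributions, combined with a union bound over the $O(n^2)$ pairs) would show that with positive probability $U$ enjoys two properties simultaneously: first, every $x \in U$ has at most $|U|^\delta$ neighbours in $H[U]$; and second, every non-edge $xy$ of $H$ inside $U$ satisfies $|\Gamma_{G[U]}(x) \triangle \Gamma_{G[U]}(y)| \geq c|U|$ for $c = c_0 c_1 / 2$. Together these force $G[U]$ to be $(c, \delta)$-diverse: for each $x \in U$, any $y \in U$ with $|\Gamma_{G[U]}(x) \triangle \Gamma_{G[U]}(y)| < c|U|$ must be $H$-adjacent to $x$ inside $U$, and there are at most $|U|^\delta$ such $y$.

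\textbf{Main obstacle.} The genuine work is the first step: bounding the number of vertices with many $H$-twins. A direct application of Erd\H{o}s--Szekeres to the $n^{\delta/2}$ alleged twins of a fixed vertex yields only a homogeneous set of size roughly $(\delta/4)\log n$, which fails to contradict the $C$-Ramsey property unless $\delta > 4C$. To close this gap, I would exploit the common reference provided by the central vertex $x$: all of its twins have neighbourhoods differing from $\Gamma(x)$ in fewer than $c_0 n$ places. Using Theorem \ref{density} to guarantee that $\Gamma(x)$ and its complement each have positive density in $V(G)$, I would iteratively split the twin-cluster according to adjacency to a chosen pivot vertex, using the similarity constraint to control how much of the cluster is lost at each step. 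After $\Theta(\log n)$ such splits one should obtain a homogeneous set whose size exceeds $C \log n$, provided $c_0$ is chosen sufficiently small in terms of $C$ and $\delta$; carefully tracking the accumulation of $c_0 n$-sized errors through this recursion is the principal technical hurdle.
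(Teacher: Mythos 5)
The statement you are trying to prove is not actually proved in this paper: Theorem~\ref{diversity} is quoted from Bukh and Sudakov~\citep{diffdegrees} with the proof omitted, so your proposal can only be judged on its own terms. The second half of your plan is fine and routine (it mirrors the sparsification in the proof of Lemma~\ref{bipartitediverse}): if you already had a set $U_0$ of $n/2$ vertices each with at most $n^{\delta/2}$ $H$-twins, then a random $U \subset U_0$ of size $c_1 n$ works, since $H$-degrees can only decrease when passing to subsets, and hypergeometric concentration with a union bound over pairs preserves every symmetric difference of size at least $c_0 n$ up to a constant factor. But this means all of the content has been pushed into your first step, which is essentially the whole theorem, and the method you propose for that step cannot work.

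Concretely, you aim to contradict the $C$-Ramsey property from a \emph{single} vertex $x$ having $n^{\delta/2}$ twins, by iterated pivot-splitting of the twin-cluster with $\Gamma(x)$ as a common reference. No choice of $c_0$ and no refinement of the splitting can make this succeed, because such a configuration is consistent with being $C$-Ramsey: start from $G(n,1/2)$, pick a set $T$ of $2n^{\delta/2}$ vertices, replace $G[T]$ by a fresh random graph, and join every vertex of $T$ to one fixed random set $S \subset V\setminus T$ (and to nothing else outside $T$). Any clique meeting $T$ is a clique of $G[T]$ together with a clique of $G[S]$, and any independent set meeting $T$ is an independent set of $G[T]$ together with one of $G[V\setminus T\setminus S]$, so $\hom(G) \le (2+\delta)\log n + O(1)$ and $G$ is $C$-Ramsey already for $C=3$; yet every vertex of $T$ has at least $n^{\delta/2}$ twins at threshold $c_0 n$ for every constant $c_0>0$, since any two vertices of $T$ have neighbourhood symmetric difference at most $|T| = o(n)$. (This is exactly why the paper remarks that a Ramsey graph need not itself be diverse.) So the local statement your obstacle-resolution would establish is simply false, and any correct proof of your first step must exploit the hypothesis that a constant fraction of the vertices lie in such twin-clusters --- robustly, also within induced subgraphs --- playing the structure across many disjoint clusters against the internal structure of each cluster; that global argument is the actual content of~\citep{diffdegrees}. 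A further small slip: Theorem~\ref{density} controls the overall edge density, not individual degrees, so it does not guarantee that $\Gamma(x)$ and its complement both have positive density for your chosen vertex $x$.
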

	
	We close this section by collecting together some standard results here for the sake of convenience; see~\citep{probmethod} for their proofs. First, we need the following classical estimate due to Tur\'{a}n.
	\begin{proposition}\label{iTuran}
		Every graph $G$ contains an independent set of size at least
		\[\frac{v(G)^2}{2e(G)+v(G)}.\eqno\qed\]
	\end{proposition}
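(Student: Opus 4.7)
The plan is to use the probabilistic argument of Caro and Wei, which delivers the stated bound as a first-moment estimate. Write $n=v(G)$ and $m=e(G)$, and recall the handshake identity $\sum_{v\in V(G)}\Deg(v) = 2m$.

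First, I would pick a uniformly random permutation $\pi$ of $V(G)$ and define
\[ I = \{ v \in V(G) : \pi(v) < \pi(u) \text{ for every } u \in \Gamma(v) \}. \]
This set is automatically independent: any two adjacent vertices of $I$ would each have to precede the other in $\pi$, a contradiction. For each vertex $v$, the event $v \in I$ amounts to $v$ being first among the $\Deg(v)+1$ elements of $\{v\} \cup \Gamma(v)$ under $\pi$, which by symmetry occurs with probability exactly $1/(\Deg(v)+1)$; linearity of expectation then yields
\[ \EV |I| \;=\; \sum_{v \in V(G)} \frac{1}{\Deg(v)+1}. \]

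It remains to lower-bound this sum subject to the degree-sum identity $\sum_v \Deg(v) = 2m$. Since the function $x \mapsto 1/(x+1)$ is convex on $[0,\infty)$, Jensen's inequality applied with the average degree $\overline{d}=2m/n$ gives
\[ \sum_{v \in V(G)} \frac{1}{\Deg(v)+1} \;\ge\; \frac{n}{\overline{d}+1} \;=\; \frac{n^2}{2m+n}, \]
and the proposition follows by passing to a realisation of $\pi$ attaining at least the expected value. I do not foresee any genuine obstacle here: the argument is a textbook application of the probabilistic method, with the only subtlety being that one must wait until the very last step to invoke the degree-sum identity.
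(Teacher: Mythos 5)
Your proof is correct, and it is essentially the argument the paper implicitly relies on: the paper states this proposition without proof, citing a standard reference on the probabilistic method, where exactly this Caro--Wei random-permutation argument (expectation $\sum_v 1/(\Deg(v)+1)$ followed by convexity) is the usual derivation. No issues to report.
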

	
	Next, we require the inequalities of Markov and Chebyshev.
	\begin{proposition}\label{iMarkov}
		Let $X$ be a non-negative real-valued random variable with mean $\mu$ and variance $\sigma^2$. For any $t\ge 0$, we have
		\[\PV(X>t)<\frac{\mu}{t}\]
		and
		\[\PV(|X-\mu|>t)<\frac{\sigma^2}{t^2}.\eqno\qed\]
	\end{proposition}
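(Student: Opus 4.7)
The plan is to first establish Markov's inequality by a direct indicator-function argument, and then to obtain Chebyshev's inequality as a corollary by applying Markov to the non-negative random variable $(X-\mu)^2$.

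For Markov's inequality, the central step is the pointwise inequality
\[ X \ge t \cdot \mathbf{1}_{\{X > t\}}, \]
valid for every outcome since $X \ge 0$: on the event $\{X > t\}$ the right-hand side equals $t$ and is strictly dominated by $X$, while off this event the right-hand side vanishes and the bound is forced by non-negativity. Taking expectations and using linearity gives $\mu \ge t\,\PV(X > t)$, so $\PV(X > t) \le \mu/t$ whenever $t > 0$ (the case $t = 0$ is vacuous if one interprets $\mu/0$ as $+\infty$). The strict inequality asserted in the statement comes from the fact that the pointwise bound above is itself strict on $\{X > t\}$: if that event has positive probability, the expectations are strictly ordered, while otherwise $\PV(X > t) = 0 < \mu/t$ is immediate (the edge case $\mu = 0$ forces $X = 0$ almost surely, so $\PV(X > t) = 0$ and the statement reads $0 < 0/t$, which one dismisses as a degenerate instance not used anywhere in the sequel).

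For Chebyshev's inequality, I would first observe that the events $\{|X - \mu| > t\}$ and $\{(X - \mu)^2 > t^2\}$ coincide. Letting $Y = (X - \mu)^2$, the random variable $Y$ is non-negative with $\EV Y = \sigma^2$ by definition of variance, so applying Markov's inequality to $Y$ at threshold $t^2$ yields
\[ \PV(|X - \mu| > t) = \PV(Y > t^2) < \frac{\sigma^2}{t^2}, \]
as required. Since both inequalities reduce to one-line arguments once the appropriate pointwise bound (respectively, the change of variables $X \leadsto (X-\mu)^2$) has been identified, there is no substantive obstacle in the proof; the only minor subtlety is promoting the weak bound $\mu \ge t\,\PV(X > t)$ to the strict form stated, which is handled by the remarks above.
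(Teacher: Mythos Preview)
Your argument is the standard textbook proof and is correct (with the degenerate-case caveat you already flagged). Note that the paper does not actually supply a proof of this proposition: it is stated with a bare \qed and the surrounding text refers the reader to \citep{probmethod} for the proofs of these routine facts. So there is nothing to compare against; your proposal simply fills in what the authors deliberately omitted.
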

	
	We shall also make use of Hoeffding's inequality.
	\begin{proposition}\label{iHoeffding}
		Let $X_{1},X_2,\dots,X_{n}$ be independent real-valued random variables with $0\le X_{i}\le 1$ for each $1\le i\le n$. For any $t \ge 0$, writing $X=\sum_{i=1}^{n}X_{i}$, we have
		\[\PV(|X-\EV[X]|\ge t)\le 2\exp\left(\frac{-2t^{2}}{n}\right).\eqno\qed \]
	\end{proposition}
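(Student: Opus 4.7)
The plan is to deploy the classical Chernoff-bound method, reducing the tail estimate to a moment-generating-function computation. I would first prove the one-sided bound $\PV(X - \EV[X] \ge t) \le \exp(-2t^2/n)$ and then obtain the two-sided version by applying the same argument to $-X_1, \dots, -X_n$ and taking a union bound, which accounts for the factor of $2$. Writing $Y_i = X_i - \EV[X_i]$ and fixing $\lambda > 0$, the exponential Markov inequality together with independence gives
\[\PV\bigl(X - \EV[X] \ge t\bigr) \le e^{-\lambda t}\, \EV\bigl[e^{\lambda \sum_i Y_i}\bigr] = e^{-\lambda t} \prod_{i=1}^n \EV\bigl[e^{\lambda Y_i}\bigr].\]

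The crux is \emph{Hoeffding's lemma}: if $Y$ is a random variable with $\EV[Y] = 0$ and $Y \in [a, b]$ almost surely, then $\EV[e^{\lambda Y}] \le \exp(\lambda^2(b-a)^2/8)$. To prove it, I would use the convexity of $x \mapsto e^{\lambda x}$ to write $e^{\lambda y} \le \tfrac{b-y}{b-a}\, e^{\lambda a} + \tfrac{y-a}{b-a}\, e^{\lambda b}$ for every $y \in [a,b]$; taking expectations and using $\EV[Y] = 0$ bounds $\EV[e^{\lambda Y}]$ by $\tfrac{b}{b-a}\, e^{\lambda a} - \tfrac{a}{b-a}\, e^{\lambda b}$. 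Setting $p = -a/(b-a) \in [0,1]$ and $u = \lambda(b-a)$, this right-hand side equals $\varphi(u) = e^{-pu}\bigl(1 - p + p e^u\bigr)$, and a direct calculation shows that $L(u) := \log \varphi(u)$ satisfies $L(0) = L'(0) = 0$ and
\[L''(u) = q(1-q) \le 1/4, \qquad \text{where } q = \tfrac{p e^u}{1 - p + p e^u} \in [0,1];\]
Taylor's theorem then yields $L(u) \le u^2/8$, which is precisely the lemma.

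Applied to each $Y_i$ (which takes values in an interval of length at most $1$, since $X_i \in [0,1]$), the lemma gives $\EV[e^{\lambda Y_i}] \le \exp(\lambda^2/8)$, and hence
\[\PV\bigl(X - \EV[X] \ge t\bigr) \le \exp\bigl(-\lambda t + n \lambda^2/8\bigr).\]
Optimising by taking $\lambda = 4t/n$ produces the bound $\exp(-2t^2/n)$, and combining with the symmetric lower-tail estimate completes the proof. The only step requiring genuine thought is establishing $L''(u) \le 1/4$ in Hoeffding's lemma, which reduces to the elementary fact that $q(1-q) \le 1/4$ for $q \in [0,1]$; the remainder is a routine assembly of Markov's inequality, independence, and a one-parameter optimisation.
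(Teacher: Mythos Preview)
Your proof is correct; this is the standard Chernoff-method argument via Hoeffding's lemma. The paper does not actually prove this proposition: it is listed among the ``standard results'' in the preliminaries, stated with a terminal \qed\ and no proof, and the reader is referred to a textbook for the details, so there is nothing further to compare.
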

	
	The following tail bound for the hypergeometric distribution may be deduced from Hoeffding's inequality; see~\citep{hoeff} for a proof.
	\begin{proposition}\label{hypergeom}
		Let $X$ denote the number of successes in $D$ draws, without replacement, from a population of size $N$ that contains $M$ successes. For any $t \ge 0$, we have
		\[ \PV(|X - (M/N)D| \ge tD) \le 2\exp(-2t^2D). \eqno\qed\]
	\end{proposition}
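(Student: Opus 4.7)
The proposition is the standard Hoeffding-type concentration bound for the hypergeometric distribution; the plan is to deduce it from Proposition~\ref{iHoeffding} via Hoeffding's classical observation that sampling without replacement is dominated, in the convex order, by sampling with replacement.

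First, write $X = X_1 + \cdots + X_D$, where $X_i \in \{0,1\}$ is the indicator that the $i$-th without-replacement draw is a success. By the exchangeability of without-replacement draws, $\EV[X_i] = M/N$, so $\EV[X] = (M/N)D$. The variables are negatively correlated but not independent, so Proposition~\ref{iHoeffding} does not apply directly.

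The heart of the argument is the following convex-ordering lemma: for every convex $\phi:\R\to\R$,
\[ \EV[\phi(X_1 + \cdots + X_D)] \le \EV[\phi(Y_1 + \cdots + Y_D)], \]
where $Y_1,\ldots,Y_D$ are i.i.d.\ Bernoulli$(M/N)$. To prove this I would run a hybrid argument: for $0 \le k \le D$, let $Z_k$ be the number of successes when the first $k$ draws are made with replacement and the remaining $D-k$ without replacement from the untouched population. Then $Z_0 \stackrel{d}{=} X$ and $Z_D \stackrel{d}{=} Y_1 + \cdots + Y_D$, and conditioning on the outcomes of the last $D-k-1$ without-replacement draws lets one compare $\EV[\phi(Z_k)]$ and $\EV[\phi(Z_{k+1})]$ by Jensen's inequality, since replacing a without-replacement draw by an independent one amounts, conditionally, to averaging a convex function over a larger set. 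Iterating in $k$ gives the claim. This is the only non-routine step.

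Once the convex ordering is in hand, the Chernoff recipe finishes the proof. Applying it with $\phi(x) = e^{\lambda x}$ for $\lambda > 0$ and using the standard MGF bound $\EV[e^{\lambda(Y_i - M/N)}] \le e^{\lambda^2/8}$ (the single-variable inequality underlying Proposition~\ref{iHoeffding}) yields
\[ \EV\!\left[\exp\!\bigl(\lambda(X - (M/N)D)\bigr)\right] \le \exp\!\left(\lambda^2 D/8\right). \]
Markov's inequality (Proposition~\ref{iMarkov}) then gives $\PV(X - (M/N)D \ge tD) \le \exp(-\lambda t D + \lambda^2 D/8)$, and optimising at $\lambda = 4t$ produces the bound $\exp(-2t^2 D)$. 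The lower tail follows by applying the same argument with $\lambda < 0$ (or by considering $-X$ and the complementary population), and the factor of $2$ comes from a union bound over the two tails. The only genuine obstacle is the convex-ordering step; the rest is a standard Chernoff computation, which is why the paper simply cites Hoeffding's 1963 article for the details.
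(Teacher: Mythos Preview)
Your proposal is correct and is precisely the argument from Hoeffding's 1963 paper that the authors cite; the paper itself gives no proof (the \qed after the statement, together with the pointer ``see~\citep{hoeff} for a proof'', signals that the result is quoted, not proved).

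One small caveat on the hybrid step: it is not a pure conditional Jensen inequality. After conditioning on the last $D-k-1$ without-replacement draws (say with $s$ successes among them) and on $Y_1,\dots,Y_k$, the $(k+1)$-th without-replacement indicator is Bernoulli with parameter $p_s=(M-s)/(N-D+k+1)$, whereas its with-replacement counterpart is Bernoulli$(M/N)$. Since $p_s$ can lie on either side of $M/N$, the comparison $\EV[\phi(Z_k)\mid\cdot]\le\EV[\phi(Z_{k+1})\mid\cdot]$ need not hold pointwise. What does hold is the averaged inequality: $p_s$ is decreasing in $s$ while the increment $\phi(t+1)-\phi(t)$ is nondecreasing in $t$ (by convexity), and $t$ is increasing in $s$; Chebyshev's correlation inequality then gives
\[
\EV\bigl[p_s\bigl(\phi(t+1)-\phi(t)\bigr)\bigr]\le \EV[p_s]\,\EV\bigl[\phi(t+1)-\phi(t)\bigr],
\]
and a short computation shows $\EV[p_s]=M/N$, which is exactly what is needed. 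With this refinement the hybrid goes through, and your Chernoff computation (optimised at $\lambda=4t$) is correct as written.
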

	
	Finally, we need the following technical claim whose proof is a straightforward (if somewhat tedious) calculation using Proposition~\ref{hypergeom} and Stirling's approximation.
	\begin{proposition}\label{bineq}
		For any $\delta > 0$, there exists a constant $c >0$ such that the following holds for all $n \in \N$. Let $A$ and $B$ be disjoint subsets of $\{1,2, \dots, n\}$ such that $ \delta n  \le |A| \le (1-\delta)n$ and let $k$ be an integer satisfying $\delta n \le k \le n/2$.  If $U$ is a randomly chosen $k$-element subset of $\{1,2,\dots,n\}$, then
		\[\max_{r \in \Z} \left\{\PV (|U \cap B| - |U \cap A| = r)\right\} \le c/n^{1/2}.\]
	\end{proposition}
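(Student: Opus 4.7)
The plan is to bound the maximum point probability of $Z := |U \cap B| - |U \cap A|$ by conditioning so as to reduce to anti-concentration for a suitably-parameterised hypergeometric distribution. The underlying fact I would use is that a hypergeometric random variable with parameters $(N, K, D)$ satisfying $\min(K, N-K, D, N-D) \ge \alpha N$ has maximum point probability at most a constant (depending on $\alpha$) times $N^{-1/2}$: this follows from Stirling's approximation applied to the point probability $\binom{K}{x}\binom{N-K}{D-x}/\binom{N}{D}$, which peaks near the mean with value of order $1/\sqrt{\sigma^2} = \Theta(N^{-1/2})$.

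Write $a := |A|$, $b := |B|$, $\gamma := n - a - b$, $X := |U \cap A|$ and $Y := |U \cap B|$. I would work with two dual conditionings: conditional on $Y = y$, the variable $X$ is hypergeometric on $\{1,\dots,n\} \setminus B$ with $a$ successes and $k - y$ draws; symmetrically, writing $T := |U \cap (\{1,\dots,n\}\setminus(A\cup B))|$, conditional on $T = t$ the variable $X$ is hypergeometric on $A \cup B$ with $a$ successes and $k - t$ draws, in which case $Z = k - t - 2X$. In either setting $\PV(Z = r)$ is a convex combination of the corresponding conditional point probabilities, so a uniform $O(n^{-1/2})$ bound on those conditional probabilities would suffice.

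I would then split into three cases according to the sizes of $b$ and $\gamma$. If $b \le \delta n / 4$, then $y \le b$ is automatically small, and the four parameters $n - b$, $a$, $k - y$, $(n - b)-(k - y)$ of $X \mid Y = y$ are all $\Theta(n)$ uniformly in admissible $y$; the Stirling bound applies directly. If $\gamma \le \delta n / 4$, then necessarily $b \ge 3\delta n/4$, and the symmetric argument via $T$ gives four parameters $a + b$, $a$, $k - t$, $(a+b)-(k-t)$ that are again all $\Theta(n)$ uniformly in $t$. When $\min(b, \gamma) > \delta n / 4$, I would invoke Proposition~\ref{hypergeom} to conclude that $Y$ lies within $\sqrt{n}\log n$ of its mean $kb/n$ with probability at least $1 - n^{-100}$; a direct calculation then shows that on this typical event the parameters of $X \mid Y = y$ are all $\Theta(n)$, so the typical contribution to $\PV(Z = r)$ is $O(n^{-1/2})$, while the atypical contribution of at most $n^{-100}$ is absorbed into that bound.

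The only real obstacle is the bookkeeping: one must verify in each of the three regimes that all four parameters of the relevant hypergeometric are bounded below by a constant multiple of $n$ depending only on $\delta$. The anti-concentration lemma is itself a routine Stirling calculation and Proposition~\ref{hypergeom} slots in cleanly to handle the atypical event in the third case, so once the parameter lower bounds are verified the three cases combine to give the claimed bound $c/n^{1/2}$ uniformly in $r$.
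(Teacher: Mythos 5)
Your proposal is correct and is essentially the paper's own argument: the paper likewise reduces to a hypergeometric point probability by conditioning on $|U\cap(A\cup B)|$ (equivalently your $T$), or on $|U\cap(A\cup B')|$ with $B'=\{1,\dots,n\}\setminus(A\cup B)$ (equivalently your $Y$), bounds the conditional probability at its mode via Stirling, and controls atypical values of the conditioning variable with Proposition~\ref{hypergeom}. Your three-way split on the sizes of $B$ and $B'$ is just a finer version of the paper's two-case split on which of these sets is larger (your first two cases simply avoid the tail bound when it is unnecessary), so the two proofs differ only in bookkeeping.
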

	\begin{proof}
		Let $B' = \{1, 2, \dots, n\} \setminus (A \cup B)$ and note that at least one of $B$ or $B'$ has cardinality at least $\delta n/2$.
		
		First, suppose that $|B| \ge |B'|$. In this case, we have $\delta/2 \le |B|/n \le 1- \delta$. Let $a = |A|$ and $X_a = |U\cap A|$, and similarly, let $b = |B|$ and $X_b = |U \cap B|$. Finally, let $s=|A\cup B| = a+b$ and $X = |U \cap (A \cup B)|= X_a + X_b$.
		
		Our task is to estimate $\max_{r\in \mathbb{Z}}\{\PV(X_b - X_a = r)\}.$ For a fixed $r \in \Z$, it is clear that
		\[
		\PV(X_b - X_a = r)=\sum_{t\in \mathbb{Z}}\PV(X_a=t, X_b=r+t).
		\]
		We have $\EV[X]=ks/n$ and as $\delta n \le k \le n/2$, we also have $\EV[X] \in [\delta s,s/2]$. Let $\C{T}_1 = \C{T}_1(r)$ denote the set of integers $t \in \Z$ satisfying $|\EV[X] - (2t+r)| \ge \delta s/10$ and let $\C{T}_2 = \C{T}_2(r) = \Z \setminus \C{T}_1$.
		
		We first estimate the sum
		\[
		T_1(r) = \sum_{t \in \C{T}_1}\PV(X_a=t, X_b=r+t).
		\]
		This sum is clearly bounded above by $\PV(|X-\EV[ X]|\ge \delta s /10)$. Since $X$ is a hypergeometric random variable, we deduce from Proposition~\ref{hypergeom} that
		\[T_1(r) \le \PV(|X-\EV [X]| \ge \delta s / 10) \le 2\exp( -\delta^3 n/50)\]
		for all $r \in \Z$.
		
		Next, we estimate the sum
		\[
		T_2(r) = \sum_{t \in \C{T}_2}\PV(X_a=t, X_b=r+t).
		\]
		To do so, note that
		\[
		\PV(X_a=t, X_b=r+t) =  \PV(X_a=t, X_b=r+t \,|\, X = 2t+r)\PV(X = 2t+r)
		\]
		and that
		\[
		\PV(X_a=t, X_b=r+t \,|\, X = 2t+r) =  \binom{a}{t}\binom{b}{t+r}\binom{s}{2t+r}^{-1}.
		\]
		It follows that
		\begin{align*}
			T_2(r) &\le \sum_{t \in \C{T}_2} \PV(X = 2t+r) \binom{a}{t}\binom{b}{t+r}\binom{s}{2t+r}^{-1}  \\
			&\le \max_{t \in \C{T}_2}  \left\{\binom{a}{t}\binom{b}{t+r}\binom{s}{2t+r}^{-1}\right\}.
		\end{align*}
		Since $|\EV[X]-(2t+r)| < \delta s /10$ for all $t \in \C{T}_2$ and $\EV[X] \in [\delta s, s/2]$, it is clear that
		\[(9\delta/10)s \le 2t+r \le (1/2 + \delta/10)s\]
		for all $t \in \C{T}_2$. Therefore, writing $\C{Z}$ for the set of pairs $(x,y) \in \Z^2$ with 
		\[(9\delta/ 10)s  \le x \le (1/2 + \delta/10)s \text{ and } 0\le y\le x,\] we have
		\[
		\max_{t \in \C{T}_2} \left\{  \binom{a}{t}\binom{b}{t+r}\binom{s}{2t+r}^{-1} \right\}\le
		\max_{(x,y) \in \C{Z}} \left\{ \binom{a}{y}\binom{b}{x-y}\binom{s}{x}^{-1}\right\}.\]
		It is easy to check using Stirling's approximation that
		\[\binom{a}{y}\binom{b}{x-y}\binom{s}{x}^{-1} \le 10\sqrt{\frac{s^{3}}{x(s-x)ab}}
		\]
		for all $0 \le y \le x$. As $a$, $b$ and $s$ are all contained in the interval $[\delta n/2, (1-\delta)n]$, there is a constant $c' > 0$ such that $T_2(r) \le c'/n^{1/2}$ for all $r \in \Z$. This proves the claim in the case where $|B| \ge |B'|$.
		
		The same argument as above, but with the roles of $B$ and $B'$ interchanged, proves the claim in the case where $|B'| \ge |B|$.
	\end{proof}
	
	\section{Overview of our strategy}\label{sketch}
	Let us first sketch our strategy to prove Theorem~\ref{mainthm}. Given a $C$-Ramsey graph $G = (V,E)$ on $n$ vertices, we shall show that $G$ contains induced subgraphs of many different sizes in three steps. 
	
	The first step, which is accomplished in Theorem~\ref{firststep}, is to show that for every $m \approx n^2$, it is possible to find a set of vertices $U$ with $|e(G[U]) - m| \approx n^{3/2}$ for which there is a set $W\subset V\setminus U$  of about $n^{1/2}$ vertices whose degrees in $U$ are all distinct and contained in some interval of length about $n^{1/2}$ located reasonably far away from the origin.
	
	The second step, which is the crux of the matter and accomplished in Theorem~\ref{secondstep}, is to show that the set $\{e(G[U\cup Z]): Z\subset W\}$ has size about $n^{3/2}$ for each pair $(U,W)$ obtained in the first step, and that these sizes are additionally all contained in an interval of length about $n^{3/2}$ centred at $e(G[U])$. The arguments in the second step are similar in spirit to those used in the first step, but much more involved. The added difficulty arises from having to exploit the two additional properties that are now at our disposal. First, we know that if $Z_1,Z_2 \subset W$ and $|Z_1|<|Z_2|$, then $e(G[U\cup Z_1]) < e(G[U\cup Z_2])$ since $U$ is much bigger than $W$ and each vertex of $W$ has many neighbours in $U$; this means that we may treat each `level' in the power set of $W$ independently. Second, we also know that the degrees $d_U(x)$ are distinct for all $x \in W$. We use these two facts to generate induced subgraphs of different sizes as follows. For many $k \approx n^{1/2}$, we randomly generate roughly $n^{1/2}$ different sets $Z \subset W$ with $|Z| = k$ for which the sums 
	\[ 
	\sum_{x \in Z} d_U(x) + e(G[Z]) 
	\] are `well-separated'. We then show how one may augment each such set $Z$ in about $n^{1/2}$ different ways by adding exactly one vertex to $U \cup Z$ from $W \setminus Z$; to do so, we use the fact that $G[W]$ is $C'$-Ramsey (for $C' \approx 2C$) to show that there are, on average, about $n^{1/2}$ vertices in $W\setminus Z$ all with distinct degrees in $U\cup Z$. 
	
	The third and final step consists of putting together the $\approx n^{3/2}$ subgraph sizes obtained at $\approx n^{1/2}$ scales together to yield $\approx n^{2}$ different subgraph sizes; of course, some care is needed to ensure that no two subgraphs at `different scales' have the same size, but this is for the most part straightforward.
	
	\section{Proof of the main result}\label{proof}
	We begin with the following claim that makes precise the first step outlined in our strategy in the previous section.
	\begin{theorem}\label{firststep}
		For any $C, \eps > 0$, there exist constants $c_1,c_2>0$ such that the following holds for all sufficiently large $n \in \N$. If $G=(V,E)$ is a $C$-Ramsey graph on $n$ vertices, then for every integer $m$ satisfying $c_{1}n^{2} \le m \le 2 c_{1}n^{2}$, there exist disjoint sets $U, W \subset V$ such that
		\begin{enumerate}
			\item\label{1:1} $m-2n^{3/2}  \le e(G[U]) \le m+2n^{3/2}$,
			\item\label{1:2} $|W| \ge n^{1/2-\eps}$,
			\item\label{1:3} there exists a positive real number $l \ge c_{2}n$ such that $l \le \Deg_{U}(x) \le l+n^{1/2+\eps}$ for all $x\in W$, and
			\item\label{1:4} the degrees $\Deg_{U}(x)$ are distinct for all $x\in W$.
		\end{enumerate}
	\end{theorem}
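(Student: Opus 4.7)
The strategy is to reduce to a $(c_0, \delta_0)$-diverse induced subgraph via Theorem~\ref{diversity}, carve out a large set $W_0$ of pairwise-diverse vertices whose degrees share a tight interval, pick $U$ uniformly at random from the remaining vertices, and verify that all four conditions hold simultaneously with positive probability. The main obstacle is condition~(\ref{1:4}): showing that the degrees $\Deg_U(x)$ for $x \in W_0$ are pairwise distinct. This is where Proposition~\ref{bineq} enters.

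\emph{Construction of $W_0$.} Fix $\delta_0 := \eps/4$ and apply Theorem~\ref{diversity} to obtain a $(c_0, \delta_0)$-diverse induced subgraph $H$ on a set $V_0 \subset V$ with $|V_0| = n_0 \ge c_0 n$. The subgraph $H$ is still $O(C)$-Ramsey, so Theorem~\ref{density} gives its edge density in $[\delta', 1-\delta']$ for some $\delta' > 0$. A short diversity argument shows that any two vertices of $V_0$ with $\Deg_H(v) < c_0 n_0/2$ automatically form a bad pair (their symmetric difference has size $< c_0 n_0$), and since each vertex has at most $n_0^{\delta_0}$ bad partners, at most $n_0^{\delta_0}+1$ vertices have such low degree; the symmetric argument in the complement bounds the number of vertices with degree $> (1-c_0/2)n_0$. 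Hence the set $V_0'$ of vertices with $\Deg_H(v) \in [c_0 n_0/2, (1-c_0/2)n_0]$ satisfies $|V_0'| \ge n_0/2$ for large $n$. Partitioning this degree range into intervals of length $n_0^{1/2+\eps/4}$ and pigeonholing on $V_0'$ yields a subset $V_1 \subset V_0'$ of size at least $n_0^{1/2+\eps/4}/4$ with all degrees in some interval $[l, l+n_0^{1/2+\eps/4}]$ satisfying $l \ge c_0 n_0/2$. Applying Proposition~\ref{iTuran} to the bad-pair graph on $V_1$ (whose maximum degree is at most $n_0^{\delta_0}$) extracts $W_0 \subset V_1$ of pairwise-diverse vertices with $|W_0| \ge n_0^{1/2+\eps/4-\delta_0}/5 \ge n_0^{1/2}$; trim $W_0$ down to $\lceil n_0^{1/2-\eps/2}\rceil$ vertices, which will keep the number of potential degree-collisions manageable.

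\emph{Random $U$ and conditions (\ref{1:1})--(\ref{1:3}).} Let $U$ be a uniformly random $k$-subset of $V_0 \setminus W_0$, with $k = \Theta(n_0)$ chosen so that $\EV[e(H[U])]$ is within $n_0$ of $m$; such a $k$ exists for every $m$ in a suitable window $[c_1 n^2, 2c_1 n^2]$ because $\EV[e(H[U])]$ is monotone in $k$, ranges from $0$ to $\Theta(n_0^2)$, and increments by $O(n_0)$ per unit step in $k$. A standard second-moment computation gives $\V(e(H[U])) = O(n_0^3)$, so Chebyshev (Proposition~\ref{iMarkov}) yields $|e(H[U])-m| \le 2n^{3/2}$ with probability at least $3/4$, settling~(\ref{1:1}). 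For each $x \in W_0$, $\Deg_U(x)$ is hypergeometric with mean $\Deg_{V_0\setminus W_0}(x) \cdot k/(n_0-|W_0|)$, and by construction all such means lie in a common interval of length $O(n_0^{1/2+\eps/4})$. Proposition~\ref{hypergeom} with $t = n_0^{1/2}\log n_0$ and a union bound over $|W_0|$ vertices place every $\Deg_U(x)$ within $n_0^{1/2}\log n_0$ of its mean with high probability, which yields~(\ref{1:2}) and~(\ref{1:3}), the lower bound $l \ge c_2 n$ coming from $k \ge \delta n_0$.

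\emph{The main obstacle: condition~(\ref{1:4}).} For distinct $x, y \in W_0$, define the disjoint sets $A_{xy} = (\Gamma(y)\setminus\Gamma(x))\cap(V_0\setminus W_0)$ and $B_{xy} = (\Gamma(x)\setminus\Gamma(y))\cap(V_0\setminus W_0)$. Diversity of the pair $(x,y)$ inside $W_0$ gives $|A_{xy}|+|B_{xy}| \ge |\Gamma(x)\triangle\Gamma(y)| - 2|W_0| \ge c_0 n_0/2$, so after possibly interchanging $A_{xy}$ and $B_{xy}$ we may assume $|A_{xy}| \ge c_0 n_0/4$; moreover $|A_{xy}| \le \Deg_H(y) \le (1-c_0/2)n_0$, so with $\delta := c_0/4$ the hypotheses of Proposition~\ref{bineq} are met inside $V_0 \setminus W_0$. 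Since $\Deg_U(x) - \Deg_U(y) = |B_{xy}\cap U| - |A_{xy}\cap U|$, Proposition~\ref{bineq} bounds $\PV[\Deg_U(x) = \Deg_U(y)] \le c/n_0^{1/2}$. Summing over pairs, the expected number of colliding pairs is $O(n_0^{1-\eps}/n_0^{1/2}) = O(n_0^{1/2-\eps})$, which is $o(|W_0|)$ since $|W_0| = n_0^{1/2-\eps/2}$. By Markov's inequality (Proposition~\ref{iMarkov}), with probability at least $1/2$ there are at most $|W_0|/2$ colliding pairs, and then a greedy deletion of one endpoint per colliding pair produces $W \subset W_0$ with $|W| \ge |W_0|/2 \ge n^{1/2-\eps}$ and all $\Deg_U$-values distinct. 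A final union bound over the three positive-probability events combines everything, completing the proof.
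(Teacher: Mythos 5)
Your proposal is correct in substance and shares the paper's overall skeleton: pass to a diverse induced subgraph via Theorem~\ref{diversity}, pigeonhole to a candidate set whose degrees lie in an interval of length about $n^{1/2+\eps}$, choose $U$ at random, and establish conditions~(\ref{1:1}), (\ref{1:3}) and (\ref{1:4}) as three simultaneous positive-probability events (second moment for the edge count, Hoeffding-type concentration for the degrees, anti-concentration plus Markov for degree collisions). The route differs in several genuine ways. You sample $U$ as a uniform $k$-subset, choosing $k$ by monotonicity of the expected edge count, rather than the paper's independent inclusion with probability $p=(m/s)^{1/2}$; this lets you invoke Proposition~\ref{bineq} verbatim for the collision probabilities, whereas the paper's binomial sampling forces a short bespoke Stirling estimate there (it reserves Proposition~\ref{bineq} for Theorem~\ref{secondstep}). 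You also clean up distinctness differently: you extract a pairwise-diverse $W_0$ deterministically (Tur\'an on the bad-pair graph) before randomizing and trim it to size $n_0^{1/2-\eps/2}$ so that the expected number of collisions is $o(|W_0|)$ and a greedy deletion of one endpoint per colliding pair suffices, while the paper keeps $W_1$ of size $n^{1/2}$, absorbs the bad pairs into the expected size of the random collision graph, and applies Tur\'an afterwards. Finally, you obtain the two-sided degree window $[c_0n_0/2,(1-c_0/2)n_0]$ from diversity alone, which you then exploit for the upper-bound hypothesis of Proposition~\ref{bineq}, whereas the paper needs only a degree lower bound, taken from Theorem~\ref{density}. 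The one place needing (routine) care in your write-up is the constant bookkeeping around Proposition~\ref{bineq}: $c_1$ must be chosen small enough, in terms of $c_0$ and the density constant, that the resulting $k$ satisfies $k\le|V_0\setminus W_0|/2$, and the $\delta$ fed into Proposition~\ref{bineq} must also satisfy $\delta|V_0\setminus W_0|\le k$, so it should be a minimum of $c_0/4$ and a constant multiple of $\sqrt{c_1}$; with such choices the argument goes through.
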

	
	\begin{proof}
		In what follows, the constants we define may depend on $C$ and $\eps$, but will never depend on $n$. Fix a positive constant $\delta  = \eps/2$. Applying Theorem~\ref{diversity} to our graph $G$, we find a subset $V'\subset V$ with $|V'| \ge c|V|$ for which $G[V']$ is $(c,\delta)$-diverse for some positive constant $c$ depending on $C$ and $\eps$ alone. We shall henceforth work exclusively with $G[V']$; in particular, all vertex degrees and neighbourhoods in what follows are with respect to this graph.
		
		Note that $G[V']$ is $2C$-Ramsey for all sufficiently large $n$, so by Theorem~\ref{density}, we know that the edge density of this graph is bounded below by a constant $\alpha>0$ depending on $C$ and $\eps$ alone. A simple averaging argument tells us that the degree of at least $\alpha |V'|/2$ vertices from $V'$ is greater than or equal to $\alpha |V'|/2$; let the set of such vertices be $W_{0}$. By the pigeonhole principle, there exists a positive integer $l_{0} \ge \alpha|V'|/2 \ge \alpha cn/2$ and a subset $W_{1} \subset W_{0}$ of size $n^{1/2}$ such that the degree of every $w\in W_{1}$ lies in the interval $[l_{0},l_{0}+(2n^{1/2}/\alpha c)]$.
		
		We now define $c_{1}=\alpha c^{2}/32$ and $c_2 = \alpha^{3/2}c^2/12$. Let $s=e(G[V'\setminus W_{1}])$ and note that
		\[ s \ge \alpha \binom{|V'|}{2} - |W_1|n \ge \alpha c^{2}n^{2}/4.\]
		
		Now, let $m$ be a positive integer satisfying $c_{1}n^{2} \le m \le 2c_{1}n^{2}$. We define $p = p(m) =(m/s)^{1/2}$ and note that  $c^{1/2}_1 \le p \le 1/2$. Select $U \subset V'\setminus W_{1}$ randomly by selecting each vertex of $V'\setminus W_{1}$ with probability $p$, independently of the other vertices. We shall show that with positive probability, there is a subset $W$ of $W_{1}$ such that the sets $U$ and $W$ satisfy the conditions of the theorem.
		
		We first deal with condition~(\ref{1:1}). Let $\C{E}_1$ denote the event that we have $|e(G[U])-m| \le 2n^{3/2}$. We prove the following claim.
		\begin{claim}
			$\PV(\C{E}_1) \ge 3/4$.
		\end{claim}
		\begin{proof}
			We bound the probability of $\C{E}_1$ using the second moment of $X=e(G[U])$. First, as $U$ contains every edge of $G[V'\setminus W_{1}]$ with probability $p^{2}=m/s$, we have $\EV[X]=m$. We claim that $\sigma_X^{2} \le n^3$. To see this, first write $X=\sum_{a\in F} I(a)$, where $F = E(G[V'\setminus W_{1}])$ and $I(a)$ is the indicator of the event $\{a\in E(G[U])\}$, and then note that
			\[\EV [X^{2}]=\sum_{(a,b)\in F^{2}}\EV [(I(a)I(b)]=\sum_{|a\cap b|=0}p^{4}+\sum_{|a\cap b|=1}p^{3}+\sum_{a}p^{2},\]
			and
			\[\EV [X]^{2}=\sum_{(a,b)\in F^{2}}\EV[I(a)]\EV[I(b)]=\sum_{(a,b)\in F^{2}}p^{4}.\]
			Hence,
			\[\sigma_X^{2}=\sum_{|a\cap b|=1}(p^{3}-p^{4})+\sum_{a}(p^{2}-p^{4}) \le n^{3}(p^{3}-p^{4})+n^{2}(p^{2}-p^{4}) \le n^{3}.\]
			It now follows from Chebyshev's inequality that
			\[\PV(\C{E}_1)=\PV(|X-m| \le 2n^{3/2}) \ge 3/4. \qedhere\]
		\end{proof}
		
		Next, we address condition~(\ref{1:3}). Let $t=n^{1/2+\delta}/4$ and let $\C{E}_2$ denote the event that for all $x\in W_{1}$, we have $|\Deg_{U}(x)-pl_{0}| \le 2t$. We have the following bound for the probability of $\C{E}_2$.
		\begin{claim}
			$\PV(\C{E}_2) \ge 3/4$.
		\end{claim}
		\begin{proof}
			For $x\in V'\setminus W_{1}$, let $I(x)$ be the indicator of the event $\{x\in U\}$. Clearly, $ \Deg_{U}(x)= \sum_{y\in \Gamma(x)}I(y)$ and since these indicators are independent, we deduce from Hoeffding's inequality that
			\[ \PV(|\Deg_{U}(x)-\EV [\Deg_{U}(x)]|>t) \le 2\exp\left(-\frac{2t^{2}}{\Deg_U(x)}\right).\]
			For $x \in W_1$, we have $\EV[\Deg_{U}(x)]=p\Deg_{V'\setminus W_{1}}(x)$ and $|pl_{0}-\EV [ \Deg_{U}(x)]| \le n^{1/2} + (2n^{1/2}/\alpha c)$, where the last inequality holds since $W_1$ contains precisely $n^{1/2}$ vertices.
			Therefore, for all sufficiently large $n$, we have
			\[\PV(|\Deg_{U}(x)-pl_{0}|>2t) \le \PV(|\Deg_{U}(x)-\EV[ \Deg_{U}(x)]|> t) \le 2\exp(-n^{-2\delta}/8)\]
			for all $x \in W_1$. Thus, if $n$ is sufficiently large, it follows from the union bound that
			\[\PV(\C{E}_2) \ge 1 - 2n\exp(-n^{-2\delta}/8) \ge 3/4.\qedhere\]
		\end{proof}
		
		Finally, we deal with conditions~(\ref{1:2}) and~(\ref{1:4}). To do so, we define an auxiliary \emph{degree graph} $D$ on $W_{1}$ where two vertices $x,y\in W_{1}$ are joined by an edge if  $\Deg_{U}(x)=\Deg_{U}(y)$. Writing  $\C{E}_3$ for the event that we have  $e(D) \le 8n^{\delta}|W_{1}|$, we have the following.
		
		\begin{claim}
			$\PV(\C{E}_3) \ge 3/4$.
		\end{claim}
		\begin{proof}
			For a pair of vertices $x,y\in W_{1}$, let $I(x,y)$ be the indicator of the event $\{ \Deg_{U}(x)=\Deg_{U}(y) \}$. Let us estimate the expected number of edges of $D$. Say that a pair $\{x,y\}\subset W_{1}$ is \emph{good} if $|\Gamma(x)\triangle \Gamma(y)| \ge c|V'|$, and \emph{bad} otherwise.
			
			We claim that there exists a constant $c'>0$ such that $\PV(I(x,y)=1) \le c'/n^{1/2}$ for any good pair $\{x,y\} \subset W_1$. To see this, fix a good pair $\{x,y\} \subset W_1$ and let $a=|\Gamma(x)\setminus \Gamma(y)|$ and $b=|\Gamma(y)\setminus \Gamma(x)|$. Without loss of generality, assume that $a\le b$. As $a+b \ge cn$, we have $b\ge cn/2$. Note that $\Deg_{U}(x)=\Deg_{U}(y)$ if and only if $|(\Gamma(x)\setminus \Gamma(y))\cap U|=|(\Gamma(y)\setminus \Gamma(x))\cap U|$. It is now easy to see that
			\begin{align*}
				\PV(\Deg_{U}(x)=\Deg_{U}(y)) &=\sum_{i=0}^{a}p^{i}(1-p)^{a-i}\binom{a}{i}p^{i}(1-p)^{b-i}\binom{b}{i} \\
				&\le \max_{0 \le i \le b} \left\{p^{i}(1-p)^{b-i}\binom{b}{i} \right\} <\frac{10}{\sqrt{p(1-p)b}},
			\end{align*}
			where the last inequality is an easy consequence of Stirling's approximation for the factorial. As $b \ge cn/2$ and $0<c_{1}^{1/2} \le p \le 1/2$, it is clear that there exists a constant $c'>0$ as claimed.
			
			As $V'$ is $(c,\delta)$-diverse, the number of bad pairs from $W_{1}$ is at most $|W_{1}|n^{\delta}$. Hence, for all sufficiently large $n$, we have
			\[\EV[e(D)]=\sum_{\{x,y\} \subset W_1}\PV(I(x,y)=1) \le |W_{1}|n^{\delta}+c'\frac{|W_{1}|^{2}}{n^{1/2}} \le 2|W_{1}|n^{\delta}.\]
			It now follows from Markov's inequality that
			\[\PV(\C{E}_3)=\PV(e(D) \le 8|W_{1}|n^{\delta}) \ge \PV(e(D) \le 4\EV[ e(D)]) \ge 3/4.\qedhere\]
		\end{proof}
		
		Let $W \subset W_1$ be an independent set of maximum size in the graph $D$. We claim that the sets $U$ and $W$ satisfy the conditions of the theorem with positive probability. To see this, first note that $\PV (\C{E}_1 \cap \C{E}_2 \cap \C{E}_3) \ge 1/4$. Now, if $\C{E}_1$ holds, then condition~\eqref{1:1} is clearly satisfied. Next, if $\C{E}_2$ holds, then since $W$ is a subset of $W_{1}$, condition~\eqref{1:3} holds with $l=pl_{0}-n^{1/2+\delta}/2 \ge c_2 n$. Finally, if $\C{E}_3$ holds, then the degree graph $D$ has at most $8|W_{1}|n^{\delta}$ edges. Applying Tur\'{a}n's theorem, we see that $D$ has an independent set of size at least $|W_{1}|/(16n^{\delta}+1) \ge n^{1/2 - \eps}$. As $W \subset W_1$ is an independent set of maximum size in $D$,  it is clear that $|W| \ge n^{1/2 - \eps}$ and that the degrees $\Deg_{U}(x)$ are distinct for all $x\in W$, so conditions~\eqref{1:2} and~\eqref{1:4} are also satisfied.
	\end{proof}
	
	Before we proceed further, it will help to have some notation. By a \emph{weighted graph}, we mean a graph $G=(V,E)$ with a weight function $\omega\colon  V\rightarrow \mathbb{N} \cup \{0\}$. Define the \emph{$\omega$-size} of $G$ by $e^{\omega}(G)=e(G)+\omega(V)$, where $\omega(U)=\sum_{v\in U}\omega(v)$ for any $U \subset V$. The \emph{$\omega$-degree} of a vertex $v\in V$ is given by $\Deg^{\omega}(v)=\Deg(v)+\omega(v)$; also, for a subset of vertices $U \subset V$, let $\Deg_{U}^{\omega}(v)=\Deg_{U}(v)+\omega(v)$ denote the $\omega$-degree of $v$ in $U$. We finally define, as before, the set
	\[\Phi(G, \omega)=\{e^{\omega}(H): H \text{ is an induced subgraph of }G \}\]
	and the set
	\[ \Psi(G,\omega)=\{(v(H), e^\omega(H)): H \text{ is an induced subgraph of }G \}.\]
	
	Turning to the second step in the proof of Theorem~\ref{mainthm}, we fix a pair of subsets $(U,W)$ as in Theorem~\ref{firststep} and restrict our attention to the subgraphs induced by sets of the form $U\cup Z$ for some $Z\subset W$. Our aim is to show that the set ${\{e(G[U\cup Z]):Z\subset W\}}$ contains about $n^{3/2}$ elements. As $U$ will stay fixed, it will be more convenient to attach a weight of $\Deg_{U}(v)$ to each vertex $v\in W$. Therefore, let $H = G[W]$ and define a weight function $\omega$ on $W$ by $\omega(v)=\Deg_{U}(v)$. As
	\[e(G[U\cup Z])=e(G[U])+e^{\omega}(G[Z]),\]
	the set $\Phi(H, \omega)$ is just a translate of the set $\{e(G[U\cup Z]):Z\subset W\}$. Now, observe that $H = G[W]$ is $(2C+2\eps)$-Ramsey and note also that since the weights on the vertices of $H$ are `large', subsets of $W$ of different sizes induce subgraphs of $H$ of different $\omega$-sizes. These observations lead to the formulation of the following result, which is the main ingredient in the proof of Theorem~\ref{mainthm}.
	
	\begin{theorem}\label{secondstep}
		For any $C,\delta > 0$, the following holds for all $\eps>4\delta$ and  all sufficiently large $n \in \N$. If $G$ is a $C$-Ramsey graph on $n$ vertices with an injective weight function
		\[\omega\colon V(G)\rightarrow \{0, 1, \dots,n^{1+\delta}\},\]
		then the set $\Psi(G,\omega)$ contains at least $n^{3-\eps}$ elements.
	\end{theorem}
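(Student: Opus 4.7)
The plan is to push the structure of the proof of Theorem~\ref{firststep} one level further, using the injectivity of $\omega$ to gain an additional factor of $n$ in the count. As preprocessing, I would apply Theorem~\ref{diversity} to replace $G$ by a $(c, \delta)$-diverse induced subgraph on at least $cn$ vertices; since this subgraph is $2C$-Ramsey for large $n$, after relabelling I may assume $G$ itself is $(c, \delta)$-diverse on $n$ vertices. The argument proceeds level by level: for every integer $k$ in a suitable subset $K \subset \{1, \dots, n\}$ of size at least $n^{1 - c_1 \delta}$, I aim to exhibit $\ge n^{2 - c_2 \delta}$ distinct $\omega$-sizes of induced subgraphs on exactly $k + 1$ vertices. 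Since the first coordinate $v(H) = k + 1$ distinguishes contributions of different $k$, summing over $K$ yields $|\Psi(G, \omega)| \ge n^{3 - c_3 \delta}$, which beats $n^{3 - \eps}$ once the constants are tuned, using $\eps > 4\delta$.

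Fix $k \in K$. The first sub-step produces a family $\C{Z}_k$ of $\approx n^{1 - c\delta}$ $k$-subsets of $V(G)$ with pairwise \emph{well-separated} $\omega$-sizes, meaning they differ pairwise by more than $3 n^{1 + \delta}$; this threshold exceeds the maximum possible increment $\Deg_Z^{\omega}(v) = \Deg_Z(v) + \omega(v) \le n + n^{1 + \delta}$ coming from the addition of a single vertex, and hence rules out collisions between augmentations of distinct members of $\C{Z}_k$ in the next sub-step. To build $\C{Z}_k$, I would establish the interval-level anti-concentration estimate
\[ \PV\bigl(e^{\omega}(G[Z]) \in [r, r + 3 n^{1 + \delta}]\bigr) \le n^{-1 + O(\delta)} \text{ for every } r \in \Z, \]
where $Z$ is uniform over the $k$-subsets of $V(G)$. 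Bucketing the total range of $e^{\omega}(G[\cdot])$, which has length at most $2 n^{2 + \delta}$, into windows of length $3 n^{1 + \delta}$ then produces at least $n^{1 - O(\delta)}$ occupied windows, from each of which we select one representative. I plan to prove the displayed bound by conditioning on all but two coordinates of $Z$ and applying Proposition~\ref{bineq} to the increment of $e^{\omega}$ under the resulting two-vertex swap (choosing the swapped vertices to form a diverse pair $(x, y)$ with $|\Gamma(x) \triangle \Gamma(y)| \ge cn$, which is possible for most pairs by diversity of $G$), and then using a convolution over a bounded number of independent such swaps to promote the point-scale bound of Proposition~\ref{bineq} to the required interval bound.

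The second sub-step, for each $Z \in \C{Z}_k$, is to select $\approx n^{1 - c\delta}$ vertices $v \in V(G) \setminus Z$ whose $\omega$-degrees $\Deg_Z^{\omega}(v)$ are pairwise distinct; the augmented sets $Z \cup \{v\}$ then yield $\ge n^{1 - c\delta}$ distinct $\omega$-sizes at level $k + 1$ from this particular $Z$, and the chosen well-separation of $\C{Z}_k$ prevents collisions with augmentations coming from other $Z' \in \C{Z}_k$. I would carry this out by the degree-graph argument at the end of the proof of Theorem~\ref{firststep}: form the \emph{collision graph} $D_Z$ on $V(G) \setminus Z$ with edges $\{u, v\}$ satisfying $\Deg_Z^{\omega}(u) = \Deg_Z^{\omega}(v)$, bound $e(D_Z)$ in expectation by combining the injectivity of $\omega$ with Proposition~\ref{bineq} applied to diverse pairs $(u, v)$ (for which $\Deg_Z(u) - \Deg_Z(v)$ hits the specific value $\omega(v) - \omega(u)$ with probability $\le c / n^{1/2}$), and apply Proposition~\ref{iTuran} to the resulting sparse $D_Z$ for a well-chosen $Z \in \C{Z}_k$. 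The main technical obstacle is expected to be the interval anti-concentration claim in step (a); once that is in hand, the remainder follows the template of Theorem~\ref{firststep} rather directly.
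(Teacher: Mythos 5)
Your outer scheme (levels $k$, well-separated base sets per level, one-vertex augmentations per base set) has the right shape, but the pivotal claim in your first sub-step is false. For a uniform random $k$-subset $Z$, the quantity $e^{\omega}(G[Z])$ is \emph{concentrated}: revealing the $k$ chosen vertices one at a time changes $e(G[Z])$ by at most $k\le n$ and $\omega(Z)$ by at most $n^{1+\delta}$ per step (equivalently, a second-moment computation as in the first claim in the proof of Theorem~\ref{firststep} gives variance $O(n^{3+2\delta})$), so with probability at least $3/4$ it lies within $O(n^{3/2+\delta})$ of its mean. That window meets only $O(n^{1/2})$ disjoint intervals of length $3n^{1+\delta}$, so by pigeonhole some such interval carries probability $\Omega(n^{-1/2})$ --- not $n^{-1+O(\delta)}$ --- and no swap/convolution refinement of Proposition~\ref{bineq} can get around this, since the obstruction is concentration rather than any failure of point-wise anti-concentration. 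Hence your bucketing certifies only about $n^{1/2}$ occupied windows per level. (Worse, the separation threshold $3n^{1+\delta}$ is nearly unachievable in the worst case: injectivity allows $\omega$ to take exactly the values $\{0,1,\dots,n-1\}$, and then at a fixed level $k$ all $\omega$-sizes lie in a range of length at most $\binom{k}{2}+kn\le 2n^{2}$, which admits at most about $n^{1-\delta}$ values that are pairwise $3n^{1+\delta}$-separated.) There is a second, independent shortfall in your augmentation step: for a diverse pair $\{u,v\}$, Proposition~\ref{bineq} only gives collision probability $O(n^{-1/2})$, so your bound on $\EV[e(D_Z)]$ is of order $n^{3/2}$ (about $n^{2}$ pairs at $n^{-1/2}$ each), and Proposition~\ref{iTuran} then certifies only about $n^{1/2}$ vertices with pairwise distinct $\omega$-degrees per base set, not $n^{1-c\delta}$. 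With both factors capped at $n^{1/2}$, your scheme certifies only about $n\cdot n^{1/2}\cdot n^{1/2}=n^{2}$ elements of $\Psi(G,\omega)$, far short of $n^{3-\eps}$.

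The missing ideas are precisely the two ways the paper exploits injectivity. First, the $\approx n$ well-separated base sizes at a fixed cardinality are produced \emph{deterministically}: order a diverse piece $X$ by weight and split it into a low-weight half $S$ and a high-weight half $T$, so that $\omega(t)-\omega(s)\ge cn/2$ for all $t\in T$, $s\in S$; taking base sets $L(k,i)$ with $k-i$ random vertices of $S$ and $i$ of $T$, increasing $i$ at fixed $k$ raises the $\omega$-size by at least about $cn(1/2-1/10)$ per step, which dominates all edge and degree fluctuations, so distinct $i$ give distinct sizes automatically; collisions due to the possibly huge weights of augmenting vertices are avoided not by wide separation of base sizes but by restricting the augmenting vertices to a set $Z$ whose weights lie in an interval of length $cn/4$ and which share a degree type. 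Second, the per-$(k,i)$ yield is pushed beyond the $n^{1/2}$ Tur\'an barrier by the compatibility analysis: since the weights are distinct integers, for most pairs $u,v\in Z$ and most $(k,i)$ the \emph{expected} $\omega$-degrees differ by at least $n^{1/2+\delta}$, making collisions exponentially unlikely, and the $n^{-1/2}$ bound of Proposition~\ref{bineq} is needed only for the relatively few incompatible pair--index combinations; this is what brings the average of $e(D_{k,i})$ down to about $n^{1+2\delta}$ and the yield per $(k,i)$ up to about $n^{1-4\delta}$. Your sketch uses injectivity only to say that a degree difference must hit one specific value, which cannot deliver either of these gains.
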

	
	Let us prepare for the proof of this theorem with the following simple lemma, which is a bipartite version of Theorem~\ref{diversity}.
	
	\begin{lemma}\label{bipartitediverse}
		For any $C, \delta > 0$, there exists a constant $c>0$ such that the following holds for all sufficiently large $n \in \N$. If $G$ is a $C$-Ramsey graph on $n$ vertices, then there exist disjoint sets $X,Y\subset V(G)$ with $|X|,|Y| \ge cn$ such that for every $u\in Y$, $|\Gamma_{X}(u)\  \triangle \Gamma_{X}(v)| \ge cn$ for all but at most $n^{\delta}$ vertices $v \in Y$.
	\end{lemma}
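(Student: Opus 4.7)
The plan is to pass to a large $(c', \delta)$-diverse induced subgraph of $G$ via Theorem~\ref{diversity}, and then randomly split its vertex set into $X$ and $Y$ by a fair coin flip on each vertex; Hoeffding's inequality will show that pairwise symmetric differences lose only a constant factor under this restriction, so the diversity property is inherited in the desired bipartite form.

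Concretely, I would first invoke Theorem~\ref{diversity} with parameters $C$ and $\delta$ to obtain $V' \subset V(G)$ with $|V'| \ge c' n$ such that $G[V']$ is $(c', \delta)$-diverse, where $c' = c'(C, \delta) > 0$; all neighbourhoods below are taken inside $G[V']$. Next, I would assign each vertex of $V'$ independently to $X$ or $Y$ with probability $1/2$. Call an unordered pair $\{u, v\} \subset V'$ \emph{thick} if $|\Gamma(u) \triangle \Gamma(v)| \ge c'|V'|$; by $(c', \delta)$-diversity, each $u \in V'$ lies in at most $|V'|^\delta \le n^\delta$ pairs that fail to be thick. For a thick pair $\{u, v\}$, setting $T = \Gamma(u) \triangle \Gamma(v)$, the random variable $|\Gamma_X(u) \triangle \Gamma_X(v)| = |T \cap X|$ is a sum of $|T| \ge c'|V'|$ independent Bernoulli$(1/2)$ indicators with mean at least $c'|V'|/2$, so Hoeffding's inequality (Proposition~\ref{iHoeffding}) gives
\[
\PV\bigl(|\Gamma_X(u) \triangle \Gamma_X(v)| < c'|V'|/4\bigr) \le 2\exp\bigl(-c'|V'|/8\bigr),
\]
which is exponentially small in $n$. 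A union bound over the at most $n^2$ thick pairs, together with a standard Chernoff bound for $|X|$ and $|Y|$, produces a single outcome in which $|X|, |Y| \ge c'n/4$ and every thick pair simultaneously satisfies $|\Gamma_X(u) \triangle \Gamma_X(v)| \ge c'|V'|/4 \ge c'^2 n / 4$.

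I would then fix such a partition, set $c = c'^2/4$, and verify that $X$ and $Y$ satisfy the conclusion of the lemma. Indeed, for any $u \in Y$ and any $v \in Y$ with $|\Gamma_X(u) \triangle \Gamma_X(v)| < cn$, the pair $\{u, v\}$ must fail to be thick, and hence at most $n^\delta$ such $v$ exist inside $V'$, let alone inside $Y$. The only step with any real content is the Hoeffding estimate for thick pairs; everything else is routine bookkeeping of constants, and I do not anticipate a substantial obstacle beyond the standard care required to track the various constants $c$ and $c'$.
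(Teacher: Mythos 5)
Your proposal is correct and follows essentially the same route as the paper: apply Theorem~\ref{diversity} to pass to a large $(c',\delta)$-diverse induced subgraph, split it randomly into $X$ and $Y$, and use Hoeffding's inequality with a union bound to show the symmetric differences (and the sizes of $X$, $Y$) survive up to a constant factor. The only deviations are cosmetic — your Hoeffding exponent should read $c'^2|V'|/8$ rather than $c'|V'|/8$ since $|T|$ may be as large as $|V'|$, but it remains exponentially small and the argument is unaffected.
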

	\begin{proof}
		By Theorem~\ref{diversity}, there exists a subset of $c'n$ vertices $W\subset V$ such that $G[W]$ is $(c',\delta)$-diverse, where $c'>0$ is a constant depending on $C$ and $\delta$ alone. We construct $X$ and $Y$ randomly from $W$ by assigning each vertex of $W$ uniformly at random to either $X$ or $Y$, independently of the other vertices.
		
		Let $\C{E}_1$ be the event that $|X|,|Y| \ge|W|/3$. It is immediate from Hoeffding's inequality that $\PV(\C{E}_1) \ge 3/4$ provided $n$ is sufficiently large. Let $\C{E}_2$ be the event that for all $u, v\in W$ satisfying $|\Gamma(u) \triangle \Gamma(v)|\ge c'n$, we have $|\Gamma_{X}(u)\triangle \Gamma_{X}(v)| \ge c'n/3$. Again, it is easy to deduce from Hoeffding's inequality and a simple union bound that $\PV(\C{E}_2) \ge 3/4$ for all sufficiently large $n$. Thus, $\PV(\C{E}_1 \cap \C{E}_2) \ge 1/2$, proving the claim with $c= c'/3$.
	\end{proof}
	
	We are now ready to prove Theorem~\ref{secondstep}.
	
	\begin{proof}[Proof of Theorem~\ref{secondstep}]
		In what follows, all inequalities will hold provided $n$ is sufficiently large. We apply Lemma~\ref{bipartitediverse} to our $n$-vertex $C$-Ramsey graph $G = (V,E)$ to find two disjoint sets $X,Y \subset V$, each of size at least $cn$, such that for each vertex $u\in Y$, there are at most $n^{\delta}$ other vertices $v\in Y$ for which $|\Gamma_{X}(u)\triangle \Gamma_{X}(v)|<cn$; here, $c>0$ is a constant that depends on $C$ and $\delta$ alone.
		
		Fix $m=|X|/2-cn/4$ and note that $m \ge cn/4$. Order the vertices of $X$ in increasing order of weight and let $S$ and $T$ respectively denote the first and last $m$ vertices in this ordering of $X$. Observe that since $\omega$ is injective, we have
		\[\omega(u)-\omega(v) \ge |X|-2m=cn/2 \] for all $u \in T$ and $v\in S$. Finally, let $X' = X \setminus (S\cup T)$ and note that $|X'| = cn/2$.
		
		Say that a vertex $y\in Y$ is of
		\begin{enumerate}
			\item type 1 if $\Deg_{S}(y)<cn/8$,
			\item type 2 if $\Deg_{S}(y)>m-cn/8$,
			\item type 3 if $\Deg_{T}(y)<cn/8$,
			\item type 4 if $\Deg_{T}(y)>m-cn/8$, and
			\item type 0 if it is not of any of the previous types.
		\end{enumerate}
		A vertex may have more than one (non-zero) type; call a vertex $y\in Y$ \emph{problematic} if it has two different types. We first make the following observation.
		
		\begin{claim}
			There are at most $4n^{\delta}$ problematic vertices in $Y$.
		\end{claim}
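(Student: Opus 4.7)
The plan is to partition the problematic vertices according to their type-pair and bound each class separately using the bipartite diversity supplied by Lemma~\ref{bipartitediverse}.

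I would first observe that types $1$ and $2$ are mutually exclusive, as are types $3$ and $4$: since $m \ge cn/4$, the intervals $[0, cn/8)$ and $(m-cn/8, m]$ are disjoint. Consequently, every problematic vertex carries exactly one of the four type-pairs $(1,3)$, $(1,4)$, $(2,3)$, $(2,4)$, and it suffices to bound each of these four classes separately.

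For the main step, I would fix a type-pair — say $(1,3)$ — and take any two vertices $u,v \in Y$ carrying it. Since $|\Gamma_S(u)|, |\Gamma_S(v)| < cn/8$, the triangle inequality for symmetric differences gives $|\Gamma_S(u) \triangle \Gamma_S(v)| < cn/4$; the analogous bound holds for $T$ since both vertices are of type $3$. Combined with the trivial estimate $|\Gamma_{X'}(u) \triangle \Gamma_{X'}(v)| \le |X'| = cn/2$, this yields $|\Gamma_X(u) \triangle \Gamma_X(v)| < cn$. The other three type-pairs can be handled in exactly the same way: whenever type $2$ (respectively type $4$) appears, I would replace $\Gamma_S(y)$ (respectively $\Gamma_T(y)$) by its complement within $S$ (respectively within $T$), leaving the symmetric differences unchanged while making the relevant neighborhoods small.

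With this uniform bound in hand, the claim falls out of Lemma~\ref{bipartitediverse}: for each fixed $u \in Y$, at most $n^{\delta}$ vertices of $Y$ (including $u$ itself, which trivially satisfies the inequality) can lie within symmetric-difference distance below $cn$ of $u$ in $\Gamma_X$. Hence each of the four type-pair classes has size at most $n^{\delta}$, and summing over the classes yields the desired total of $4n^{\delta}$. There is no serious obstacle here — the only mild subtlety is bookkeeping: one must check that the symmetric-difference budget $cn/4 + cn/4 + cn/2$ just reaches the threshold $cn$, and this is precisely what dictates the choice to split $X$ as $S \cup T \cup X'$ with $|X'| = cn/2$.
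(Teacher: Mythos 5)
Your proof is correct and follows essentially the same route as the paper: split the problematic vertices into the four possible type-pairs, show any two vertices in the same class have $|\Gamma_X(u)\triangle\Gamma_X(v)| < cn/4 + cn/4 + cn/2 = cn$ (complementing within $S$ or $T$ for types $2$ and $4$), and then bound each class by $n^{\delta}$ via the diversity property from Lemma~\ref{bipartitediverse}. Your added details (mutual exclusivity of types $1$/$2$ and $3$/$4$, and counting $u$ itself among the exceptional vertices) only make the paper's terse argument more explicit.
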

		\begin{proof}
			If $v\in Y$ is problematic, then there are four possibilities: either $v$ is of types 1 and 3, 1 and 4, 2 and 3, or 2 and 4. If $u,v\in Y$ are both problematic vertices of the same type, then
			\begin{align*}
				|\Gamma_{X}(u)\triangle \Gamma_{X}(v)|&=|\Gamma_{S}(u)\triangle \Gamma_{S}(v)|+|\Gamma_{T}(u)\triangle \Gamma_{T}(v)|+|\Gamma_{X'}(u)\triangle \Gamma_{X'}(v)|\\
				&<cn/4+cn/4+|X'|=cn.
			\end{align*}
			Hence, for each valid pair of types, there are at most $n^{\delta}$ problematic vertices of those types, proving the claim.
		\end{proof}
		
		It follows that $Y$ contains at least $cn-4n^{\delta} \ge cn/2$ non-problematic vertices; let $Y'$ be this set of vertices. Now, there is a subset $Y''\subset Y'$ of size at least $|Y'|/5 \ge cn/10$ where every vertex has the same type. By the pigeonhole principle, there is a subset $Z\subset Y''$ with $|Z| \ge c^2n^{1-\delta}/40$ for which the set ${\{\omega(z):z\in Z\}}$ is contained an interval of size $cn/4$.
		
		Let $\C{I} \subset \N^2$ be the set of pairs $(k,i)$ that satisfy $cn/20 \le k \le cn/10$ and  $cn/50 \le i \le cn/25$. In order to exhibit many distinct elements in $\Psi(G, \omega)$, we shall construct a family of random subsets of $X$, one for each element of $\C{I}$.
		
		For each integer $cn/20 \le k \le cn/10$, let $S_k$ be a uniformly random ordering of a $k$-element subset of $S$ chosen uniformly at random and let $T_k$ similarly be a random ordering of a random $k$-element subset of $T$, with $S_k$ and $T_k$ being chosen independently of each other; in what follows, we write $S_k= (x_{k,1}, x_{k,2}, \dots,x_{k,k})$  and $T_k =(x_{k,k+1}, x_{k,k+2}, \dots,x_{k,2k})$. Now, for each $(k,i) \in \C{I}$, define the random set $L(k,i)=\{x_{k,i+1},x_{k,i+2},\dots,x_{k,i+k}\}$ and the random variable
		\[N(k,i)=|\{\Deg^{\omega}_{L(k,i)}(v):v\in Z\}|.\] 
		Finally, to simplify notation, for each $(k,i) \in \C{I}$ and each $v \in Z$, let $\wdeg_{k,i}(v)$ denote the random variable $\Deg^{\omega}_{L(k,i)}(v)$, i.e., the $\omega$-degree of $v$ in $L(k,i)$.
		
		Our starting point is the following.
		\begin{claim}\label{lowerbound}
			The cardinality of $\Psi(G,\omega)$ is at least $N=\sum_{(k,i)\in\C{I}}N(k,i)$.
		\end{claim}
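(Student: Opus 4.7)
The plan is to exhibit $N$ distinct elements of $\Psi(G,\omega)$, one for each triple $\bigl((k,i), d\bigr)$ with $(k,i) \in \C{I}$ and $d \in \{\Deg^{\omega}_{L(k,i)}(v) : v \in Z\}$. For each such $d$, I would pick any $v(d) \in Z$ with $\Deg^{\omega}_{L(k,i)}(v(d)) = d$ and consider the induced subgraph $H = G[L(k,i) \cup \{v(d)\}]$. Since $L(k,i) \subset X$ and $Z \subset Y$ are disjoint, $v(H) = k+1$, and a direct computation gives
\[
e^\omega(H) \;=\; e(G[L(k,i)]) + \omega(L(k,i)) + \Deg^{\omega}_{L(k,i)}(v(d)) \;=\; \sigma(k,i) + d,
\]
where $\sigma(k,i) := e(G[L(k,i)]) + \omega(L(k,i))$. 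Thus each triple $((k,i),d)$ contributes the pair $(k+1,\sigma(k,i)+d)$ to $\Psi(G,\omega)$, and the remaining task is to verify that these $N$ pairs are pairwise distinct.

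For fixed $(k,i)$, different choices of $d$ give different second coordinates and hence $N(k,i)$ distinct pairs at level $k+1$. For $(k,i)$ and $(k',i')$ with $k \neq k'$, the first coordinates $k+1$ and $k'+1$ differ. The only substantive case is that of fixed $k$ with $i \neq i'$: a collision would require $\sigma(k,i) + d = \sigma(k,i') + d'$ for some admissible $d,d'$, and I would rule this out by comparing how $\sigma(k,i)$ varies in $i$ with the total range of admissible $d$.

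The key calculation is that stepping from $i$ to $i+1$ replaces $x_{k,i+1} \in S_k \subset S$ by $x_{k,i+k+1} \in T_k \subset T$. Since weights of vertices in $T$ exceed those of vertices in $S$ by at least $|X| - 2m = cn/2$, while the edge count $e(G[L(k,i)])$ can change by at most $k \leq cn/10$ per step, one obtains $\sigma(k,i+1) - \sigma(k,i) \geq cn/2 - cn/10 = 2cn/5$, and iterating yields $|\sigma(k,i) - \sigma(k,i')| \geq 2cn/5$ whenever $i \neq i'$. On the other hand, since $\{\omega(z) : z \in Z\}$ lies in an interval of length at most $cn/4$ and $\Deg_{L(k,i)}(v) \in [0,k] \subset [0, cn/10]$, the admissible values $d = \Deg^{\omega}_{L(k,i)}(v)$ for $v \in Z$ all sit inside a single interval of length at most $cn/4 + cn/10 = 7cn/20 < 2cn/5$. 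Consequently the intervals of possible second coordinates generated by $(k,i)$ and $(k,i')$ are disjoint, so no collision occurs, and in total we obtain the required $N$ distinct elements of $\Psi(G,\omega)$.

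The main obstacle is precisely this same-$k$, different-$i$ comparison: one has to ensure that sliding the window $L(k,i)$ within $S_k \cup T_k$ changes $\sigma(k,i)$ by strictly more than the $d$-range. This is exactly what the earlier choices were engineered to deliver---$m = |X|/2 - cn/4$ forces a weight gap of at least $cn/2$ between $S$ and $T$, while $Z$ was extracted with its weights confined to an interval of length only $cn/4$---so the two relevant scales separate by a clean constant factor, and no further probabilistic input is needed for the claim.
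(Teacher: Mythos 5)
Your proposal is correct and follows essentially the same route as the paper: you separate $e^{\omega}(G[L(k,i)\cup\{z\}])$ into a part $\sigma(k,i)$ depending only on the window and the $\omega$-degree $d$ of the added vertex, and rule out collisions across different $i$ by noting that shifting the window changes $\sigma$ by at least $cn(1/2-1/10)$ per step while the admissible $d$-values span an interval of length at most $cn(1/4+1/10)$ --- exactly the paper's comparison, merely organised as a per-step telescoping rather than a direct $i$ versus $i'$ estimate.
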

		\begin{proof}
			Consider the set
			\[\psi =\{(k+1,e^{\omega}(G[L(k,i)\cup \{z\}])): (k,i)\in \C{I},z\in Z\}\]
			and note that $\psi \subset\Psi(G,\omega)$. Therefore, it is enough to prove that $|\psi|=N$. Fix $(k,i) \in \C{I}$ and $z \in Z$ and suppose that
			\[(k+1,e^{\omega}(G[L(k,i)\cup \{z\}]))=(k'+1,e^{\omega}(G[L({k',i'})\cup \{z'\}]))\]
			for some $(k',i') \in \C{I}$ and $z' \in Z$. Our claim will follow if we show that this is only possible when $k=k'$, $i=i'$ and $\wdeg_{k,i}(z)=\wdeg_{k,i}(z')$.
			
			Trivially, we must have $k=k'$. Now suppose that $i \le i'$ and write $L = L(k,i)$ and $L' = L(k,i')$. Since $|L\setminus L'|=i'-i$, note that
			\[e(G[L])-e(G[L']) \le k(i'-i) \le c(i'-i)n/10.\]
			It is also clear that
			\[ \wdeg_{k,i}(z) - \wdeg_{k,i'}(z') = \Deg_{L}(z) - \Deg_{L'}(z') + \omega(z) - \omega(z') \le cn/10 + cn/4.\]
			Finally, since $L' \setminus L \subset T$ and $L\setminus L' \subset S$, it follows that
			\[\omega(L')-\omega(L) \ge c(i'-i)n/2.\]
			Putting these three inequalities together, it follows that if $i < i'$, then
			\[
			e^{\omega}(G[L'\cup \{z'\}])-e^{\omega}(G[L\cup \{z\}]) \ge cn((i'-i)(1/2 -1/10) - (1/4 + 1/10))>0.
			\]
			Thus, it must be the case that $i=i'$. It is then clear that  $\Deg^{\omega}_{L}(z)=\Deg^{\omega}_{L}(z')$, proving the claim.
		\end{proof}
		
		By the previous claim, it suffices to show that with positive probability, we have \[\sum_{(k,i)\in\C{I}} N(k,i)>n^{3-\eps}.\] We shall deduce this as a consequence of the following two observations.
		\begin{enumerate}
			\item First, if $|\omega(u)-\omega(v)|$ is large for a pair of vertices $u, v \in Z$, then the difference between their expected $\omega$-degrees in $L(k,i)$ cannot be small for too many pairs $(k,i) \in \C{I}$.
			\item Next, if $u$ and $v$ are vertices in $Z$ for which the difference between their expected $\omega$-degrees in $L(k,i)$ is large, then the probability that we have  $\wdeg_{k,i}(u)=\wdeg_{k,i}(v)$ is extremely small.
		\end{enumerate}
		
		To make these observations precise, we need a few definitions. For $v \in Z$ and $(k,i) \in \C{I}$, let
		\[\mu_{k,i}(v) = \EV \left[\wdeg_{k,i}(v)\right] = \EV\left[ \Deg^{\omega}_{L(k,i)}(v) \right].\]
		To determine this expectation, observe that $L(k,i)\cap S$ is uniformly distributed on the $(k-i)$-element subsets of $S$ and $L(k,i)\cap T$ is uniformly distributed on the $i$-element subsets of $T$, so
		\[\mu_{k,i}(v)=\omega(v)+\left(\frac{k-i}{m}\right)\Deg_{S}(v)+\left(\frac{i}{m}\right)\Deg_{T}(v)\]
		for all $v \in Z$ and $(k,i) \in \C{I}$. Say that $(k,i) \in \C{I}$ is \emph{compatible} with a pair of vertices $\{u,v\} \subset Z$ if
		\[|\mu_{k,i}(u)-\mu_{k,i}(v)| \ge n^{1/2+\delta};\]
		otherwise, we say that $(k,i)$ is \emph{incompatible} with $\{u,v\}$.
		
		The following claim makes our first observation precise.
		
		\begin{claim}\label{badpairs}
			For any $u, v \in Z$, the number of pairs $(k,i)\in \C{I}$ incompatible with $\{u,v\}$ is at most $4n^{5/2+\delta}/|\omega(u)-\omega(v)|.$
		\end{claim}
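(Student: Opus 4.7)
The plan is to treat the count as a lattice-point enumeration in a thin strip. Writing $A = \omega(u) - \omega(v)$, $B = \Deg_S(u) - \Deg_S(v)$, and $C = \Deg_T(u) - \Deg_T(v)$, substitution into the formula for $\mu_{k,i}$ produces the affine-linear expression
\[\Delta_{k,i} := \mu_{k,i}(u) - \mu_{k,i}(v) = A + \frac{kB + i(C-B)}{m},\]
so incompatibility of $(k,i)$ with $\{u,v\}$ is precisely the strip condition $|\Delta_{k,i}| < n^{1/2+\delta}$.

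If $|A| < 2n^{1/2+\delta}$ the claim is immediate: $4n^{5/2+\delta}/|A| > 2n^2$ already exceeds the trivial upper bound $|\C{I}| \le (cn/20)(cn/50)$ on the total number of admissible pairs. I would therefore assume $|A| \ge 2n^{1/2+\delta}$, so that every bad $(k,i)$ satisfies $|kB + i(C-B)|/m \ge |A|/2$. Combining this with $k \le cn/10$, $i \le cn/25$, and $m \ge cn/4$ gives
\[|A| \le \tfrac{4}{5}|B| + \tfrac{8}{25}|C-B| \le \tfrac{4}{5}\bigl(|B| + |C-B|\bigr),\]
and hence $\max\bigl(|B|, |C-B|\bigr) \ge \tfrac{5}{8}|A|$.

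I would then count bad pairs by fixing one coordinate at a time. Suppose without loss of generality that $|B| \ge \tfrac{5}{8}|A|$. For each fixed $i$, the map $k \mapsto \Delta_{k,i}$ is affine with nonzero slope $B/m$, so the number of integer $k$ with $|\Delta_{k,i}| < n^{1/2+\delta}$ is at most $2n^{1/2+\delta}m/|B| + 1$. Summing over the $\le cn/50$ admissible values of $i$ and using $m \le n/2$ together with $|B| \ge \tfrac{5}{8}|A|$, the total number of bad pairs is at most $(cn/50)\bigl((16/5)n^{1/2+\delta}m/|A| + 1\bigr)$, which comfortably falls below $4n^{5/2+\delta}/|A|$ for large $n$. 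The case $|C-B| \ge \tfrac{5}{8}|A|$ is entirely symmetric: fix $k$ rather than $i$ and use that $i \mapsto \Delta_{k,i}$ has slope $(C-B)/m$.

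The only conceptual point is the decision to separate the contributions of $B$ and $C$: by fixing one coordinate at a time one exposes the slopes $B/m$ or $(C-B)/m$ individually, and the assumption $|A| \ge 2n^{1/2+\delta}$ forces at least one of these to be large relative to $|A|$. Everything else is a routine one-dimensional count of integers in a bounded interval.
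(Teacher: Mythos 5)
Your proposal is correct and is essentially the paper's argument: the same decomposition $\mu_{k,i}(u)-\mu_{k,i}(v)=\Delta_1+\tfrac{k}{m}\Delta_2+\tfrac{i}{m}\Delta_3$ (your $A$, $B$, $C-B$), the same trivial case when $|\omega(u)-\omega(v)|$ is small, the same observation that a bad pair forces one of the two linear coefficients to be large relative to $|A|$, and the same one-coordinate-at-a-time count of integers in an interval of length $O(n^{1/2+\delta}m/|A|)$, summed over the other coordinate. Only the explicit constants differ slightly.
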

		
		\begin{proof}
			If $|\omega(u)-\omega(v)|<2n^{1/2+\delta}$, then $4n^{5/2+\delta}/|\omega(u)-\omega(v)|>n^{2}$, in which case the claim is trivial since $|\C{I}|<n^{2}$. Therefore, we may assume without loss of generality that $|\omega(u)-\omega(v)|\ge 2n^{1/2+\delta}$. Let $\Delta_1=\omega(u)-\omega(v)$, $\Delta_2 = \Deg_{S}(u)-\Deg_{S}(v)$ and $\Delta_3 = \Deg_{S}(v)-\Deg_{S}(u)+\Deg_{T}(u)-\Deg_{T}(v)$; clearly,
			\[\mu_{k,i}(u)-\mu_{k,i}(v)=\Delta_1+\frac{k}{m}\Delta_2+\frac{i}{m}\Delta_3\]
			for all $(k,i) \in \C{I}$. Now, since $k,i \le m/2$ for all $(k,i)\in\C{I}$ and $|\Delta_1|\ge 2n^{1/2+\delta}$, there exists a pair $(k,i) \in \C{I}$ for which
			\[\left|\Delta_1+\frac{k}{m} \Delta_2 +\frac{i}{m} \Delta_3 \right|<n^{1/2+\delta}\]
			only if either $|\Delta_2|>|\Delta_1|/2$ or $|\Delta_3|>|\Delta_1|/2$.
			
			First, suppose that $|\Delta_2|>|\Delta_1|/2$. Fix $i_{0} \in \N$ and consider the set $K(i_{0})$ of those integers $k$ for which the pair $(k,i_{0}) \in \C{I}$ is incompatible with $\{u,v\}$. Let $k_{0}$ be the smallest element of $K(i_{0})$. If $k>k_{0}+4n^{3/2+\delta}/|\Delta_1|$, then \[\left|\left(\Delta_1+\frac{k}{m}\Delta_2+\frac{i_{0}}{m} \Delta_3 \right) - \left(\Delta_1 +\frac{k_{0}}{m} \Delta_2 +\frac{i_{0}}{m}\Delta_3 \right)\right|>\frac{4|\Delta_2|n^{3/2+\delta}}{|\Delta_1|m}>2n^{1/2+\delta}.\]
			Since $|\Delta_1+k_{0}\Delta_2/m+  i_{0}\Delta_3/m|<n^{1/2+\delta}$, we must have $|\Delta_1+k\Delta_2/m+  i_{0}\Delta_3/m| \ge n^{1/2+\delta}$, so $k \notin K(i_0)$. It follows that
			\[K(i_{0})\subset \left[k_{0},k_{0}+\frac{4n^{3/2+\delta}}{|\Delta_1|}\right].\]
			Consequently, the number of pairs $(k,i) \in \C{I}$ incompatible with $\{u,v\}$ is at most $4n^{5/2+\delta}/|\omega(u)-\omega(v)|$, as claimed.
			
			If $|\Delta_3|>|\Delta_1|/2$, then a similar argument (with the roles of $k$ and $i$ interchanged) establishes the claim.
		\end{proof}
		
		For each $(k,i) \in \C{I}$, define a graph $H_{k,i}$ on $Z$ where two vertices $u$ and $v$ are joined by an edge if $(k,i)$ is incompatible with $\{u,v\}$. The following is an easy corollary of the previous claim.
		
		\begin{claim}\label{totalbadpairs}$\sum_{(k,i)\in \C{I}} e(H_{k,i})<10n^{7/2+\delta}\log n.$
		\end{claim}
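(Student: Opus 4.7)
The plan is to bound $\sum_{(k,i)\in \mathcal{I}} e(H_{k,i})$ by swapping the order of summation and applying Claim~\ref{badpairs} vertex-pair by vertex-pair. Explicitly,
\[
\sum_{(k,i)\in \mathcal{I}} e(H_{k,i}) \;=\; \sum_{\{u,v\}\subset Z} \bigl|\{(k,i)\in\mathcal{I} : (k,i) \text{ incompatible with } \{u,v\}\}\bigr| \;\le\; \sum_{\{u,v\}\subset Z} \frac{4n^{5/2+\delta}}{|\omega(u)-\omega(v)|}.
\]
So the whole task reduces to bounding $\sum_{\{u,v\}\subset Z} |\omega(u)-\omega(v)|^{-1}$.

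Since $\omega$ takes values in $\N\cup\{0\}$ and is injective, I can order the vertices of $Z$ as $v_1,\dots,v_{|Z|}$ with $\omega(v_1)<\omega(v_2)<\dots<\omega(v_{|Z|})$, and then $\omega(v_j)-\omega(v_i)\ge j-i$ for $i<j$. A straightforward harmonic-sum estimate then gives
\[
\sum_{\{u,v\}\subset Z}\frac{1}{|\omega(u)-\omega(v)|} \;\le\; \sum_{1\le i<j\le |Z|} \frac{1}{j-i} \;=\; \sum_{d=1}^{|Z|-1}\frac{|Z|-d}{d} \;\le\; |Z|\ln|Z|.
\]

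Combining these, and using $|Z|\le n$, we obtain
\[
\sum_{(k,i)\in \mathcal{I}} e(H_{k,i}) \;\le\; 4n^{5/2+\delta}\cdot n\ln n \;<\; 10\, n^{7/2+\delta}\log n,
\]
for $n$ large enough, which is the desired bound. There is no real obstacle here; the only nontrivial input is Claim~\ref{badpairs}, and the rest is the elementary observation that injectivity of $\omega$ into the integers forces the reciprocal differences to be dominated by a harmonic series.
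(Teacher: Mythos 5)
Your proposal is correct and follows essentially the same route as the paper's proof: swap the order of summation, invoke Claim~\ref{badpairs} for each pair, and use the injectivity and integrality of $\omega$ to reduce $\sum_{\{u,v\}\subset Z}|\omega(u)-\omega(v)|^{-1}$ to a harmonic-type sum of order $|Z|\log|Z|\le n\log n$. The only differences are cosmetic constant bookkeeping, which the slack in the factor $10$ comfortably absorbs.
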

		
		\begin{proof} It follows from Claim~\ref{badpairs} that
			\begin{align*}
				\sum_{(k,i)\in \C{I}} e(H_{k,i})&=\sum_{\{u,v\}\subset Z}|\{(k,i)\in\C{I}:(k,i) \mbox{ is incompatible with } \{u,v\}\}| \\
				&\le \sum_{\{u,v\} \subset Z}\frac{4n^{5/2+\delta}}{|\omega(u)-\omega(v)|}.
			\end{align*}
			Let $z_{1}, z_2, \dots,z_{|Z|}$ be the elements of $Z$ with $\omega(z_{1})< \omega(z_2) < \dots<\omega(z_{|Z|})$. As $\omega$ is integer-valued, we have $|\omega(z_{a})-\omega(z_{b})|\ge |a-b|$. Hence,
			\[\sum_{\{u,v\}\subset Z}\frac{4n^{5/2+\delta}}{|\omega(u)-\omega(v)|}\le |Z|\sum_{j=1}^{|Z|}\frac{8n^{5/2+\delta}}{j}<10n^{7/2+\delta}\log n. \qedhere \]
		\end{proof}
		
		Next, for each $(k,i)\in\C{I}$, define an auxiliary graph $D_{k,i}$ on $Z$ where two vertices $u$ and $v$ are joined by an edge if $\wdeg_{k,i}(u)=\wdeg_{k,i}(v)$. It is clear that $N(k,i)$ is the size of the largest independent set in $D_{k,i}$, so by Tur\'{a}n's theorem, we have
		\[N(k,i)\ge\frac{|Z|^{2}}{2e(D_{k,i})+|Z|}.\]
		Consequently, we deduce using convexity that
		\[
		\sum_{(k,i)\in\C{I}}N(k,i)\ge \sum_{(k,i)\in\C{I}}\frac{|Z|^{2}}{2e(D_{k,i})+|Z|}\ge |\C{I}|^{2}|Z|^{2}\left(\sum_{(k,i)\in\C{I}}(2e(D_{k,i})+|Z|)\right)^{-1}.
		\]
		It follows that
		\begin{equation}\label{equ1}
			\sum_{(k,i)\in\C{I}}N(k,i) \ge 10^{-8}c^{6}n^{6-2\delta}\left(n^3+\sum_{(k,i)\in\C{I}}2e(D_{k,i})\right)^{-1}\tag{\textasteriskcentered}
		\end{equation}
		because $ 10^{-3}c^{2}n^{2} \le |\C{I}| \le n^2$ and $cn^{1-\delta}/4 \le |Z| \le n$. The last ingredient in our proof is the following claim, which says that the right-hand side of~(\ref{equ1}) is large on average.
		
		\begin{claim}\label{important}$\sum_{(k,i)\in\C{I}}\EV[e(D_{k,i})]<n^{3+2\delta}.$
		\end{claim}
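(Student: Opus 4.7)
The plan is to decompose
\[
\sum_{(k,i)\in\C{I}}\EV[e(D_{k,i})]=\sum_{\{u,v\}\subset Z}\sum_{(k,i)\in\C{I}}\PV(\wdeg_{k,i}(u)=\wdeg_{k,i}(v))
\]
and, for each pair $\{u,v\}\subset Z$, split the inner sum according to whether $(k,i)$ is compatible with $\{u,v\}$. Writing $R_S=L(k,i)\cap S$ and $R_T=L(k,i)\cap T$ for the two (independent) random subsets and $X_S=\Deg_{R_S}(u)-\Deg_{R_S}(v)$, $X_T=\Deg_{R_T}(u)-\Deg_{R_T}(v)$, the event in question is $X_S+X_T=\omega(v)-\omega(u)$. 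For \emph{compatible} $(k,i)$ this forces $X_S+X_T$ to deviate from its mean by at least $n^{1/2+\delta}$; applying Proposition~\ref{hypergeom} individually to each of $|R_S\cap A_S|$, $|R_S\cap B_S|$, $|R_T\cap A_T|$, $|R_T\cap B_T|$ (where $A_S=\Gamma_S(u)\setminus\Gamma_S(v)$, $B_S=\Gamma_S(v)\setminus\Gamma_S(u)$, and analogously for $T$) and a union bound gives $\PV\le 8\exp(-c_0n^{2\delta})$ for some absolute $c_0>0$. Summed over all pairs in $Z$ and all $(k,i)\in\C{I}$, this contributes $o(1)$.

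For \emph{incompatible} $(k,i)$, the target is the anti-concentration bound $\PV(X_S+X_T=\omega(v)-\omega(u))\le c'/\sqrt{n}$. By independence of $R_S,R_T$, one has $\PV(X_S+X_T=r)\le\min(\max_{r'}\PV(X_S=r'),\max_{r'}\PV(X_T=r'))$, so it suffices to bound one of these pointwise densities; Proposition~\ref{bineq} is tailor-made provided some $A\in\{A_S,B_S\}$ (or $\{A_T,B_T\}$) has size in the density window $[(c/4)m,(1-c/4)m]$. Lemma~\ref{bipartitediverse} ensures that all but $\le|Z|n^\delta\le n^{1+\delta}$ pairs $\{u,v\}\subset Z$ are \emph{good}, i.e., satisfy $|\Gamma_X(u)\triangle\Gamma_X(v)|\ge cn$; combined with $|X\setminus(S\cup T)|\le cn/2$, good pairs satisfy $|A_S\cup B_S|+|A_T\cup B_T|\ge cn/2$. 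A short case analysis now uses that the vertices of $Z$ share a common type and are non-problematic: for a type $t\in\{1,2,3,4\}$ pair the type constraint forces $|A_\cdot|,|B_\cdot|<cn/8$ in the constrained coordinate ($S$ for $t\in\{1,2\}$, $T$ for $t\in\{3,4\}$), so the diversity lower bound must be realized in the other coordinate, where non-problematicity gives $\Deg_\cdot(u),\Deg_\cdot(v)\in[cn/8,m-cn/8]$; for a type $0$ pair, both coordinates already lie in this range and one simply picks the side that captures at least $cn/4$ of the diversity. In every case the winning set has size in $[cn/8,m-cn/8]\subset[(c/4)m,(1-c/4)m]$ (using $m\le n/2$), so Proposition~\ref{bineq} delivers $\PV\le c'/\sqrt{n}$ for good pairs; bad pairs are handled trivially by $\PV\le 1$.

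Putting the pieces together, the good-pair incompatible contribution is at most $(c'/\sqrt{n})\sum_{(k,i)\in\C{I}}e(H_{k,i})\le(c'/\sqrt{n})\cdot 10n^{7/2+\delta}\log n=O(n^{3+\delta}\log n)$ by Claim~\ref{totalbadpairs}; the bad pairs contribute at most $|\C{I}|\cdot n^{1+\delta}\le n^{3+\delta}$; the compatible contribution is $o(1)$; and all three are strictly below $n^{3+2\delta}$ for $n$ large. The main obstacle is the case analysis in the middle step, where one must secure both the density upper bound (leveraging non-problematicity to control the degree in the unconstrained direction) and the matching lower bound (leveraging diversity to land $\ge cn/8$ worth of edge-difference on the \emph{right} coordinate) simultaneously; once this is done, the rest is routine bookkeeping of the estimates in Propositions~\ref{hypergeom} and~\ref{bineq} together with Claim~\ref{totalbadpairs}.
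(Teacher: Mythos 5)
Your proposal is correct and follows essentially the same route as the paper: the same three-way split into compatible pairs (handled by hypergeometric concentration as in Claim~\ref{subclaim1}), good incompatible pairs (anti-concentration via Proposition~\ref{bineq} after the same type/non-problematic case analysis as Claim~\ref{subclaim2}, counted via Claim~\ref{totalbadpairs}), and bad pairs (bounded trivially using Lemma~\ref{bipartitediverse}), with the same final bookkeeping giving a bound below $n^{3+2\delta}$.
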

		
		Note that
		\[ \sum_{(k,i)\in\C{I}}\EV[e(D_{k,i})]=\sum_{(k,i)\in\C{I}}\sum_{\{u,v\}\subset Z}\PV(\wdeg_{k,i}(u)=\wdeg_{k,i}(v)).\]
		To prove Claim~\ref{important}, it will therefore be convenient to have bounds for probability of the event $\{ \wdeg_{k,i}(u)=\wdeg_{k,i}(v) \}$ for various $(k,i) \in \C{I}$ and $\{u,v\} \subset Z$. First, we prove the following bound that handles the case where $(k,i)$ is compatible with $\{u,v\}$.
		\begin{claim}\label{subclaim1}
			If $(k,i) \in \C{I}$ is compatible with $\{u,v\} \subset Z$, then
			\[\PV(\wdeg_{k,i}(u)=\wdeg_{k,i}(v))<4\exp(-n^{\delta}).\]
		\end{claim}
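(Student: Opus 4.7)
The plan is to use hypergeometric concentration to show that the random variable $\wdeg_{k,i}(u) - \wdeg_{k,i}(v)$ is sharply concentrated around its mean $\mu_{k,i}(u) - \mu_{k,i}(v)$, whose absolute value is at least $n^{1/2+\delta}$ by the compatibility hypothesis. The event $\{\wdeg_{k,i}(u) = \wdeg_{k,i}(v)\}$ therefore forces an atypically large deviation of an independent hypergeometric sum, and I would bound its probability by a union of tail estimates from Proposition~\ref{hypergeom}.

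To set this up, I would first decompose
\[\wdeg_{k,i}(u) - \wdeg_{k,i}(v) = (\omega(u) - \omega(v)) + X_S + X_T,\]
where $X_S := \Deg_{L(k,i) \cap S}(u) - \Deg_{L(k,i) \cap S}(v)$ and $X_T$ is defined analogously. By construction, $L(k,i) \cap S$ is a uniformly random $(k-i)$-element subset of $S$ and $L(k,i) \cap T$ is an independently chosen uniformly random $i$-element subset of $T$, so $X_S$ and $X_T$ are independent. Moreover, writing $A_S := (\Gamma(u) \setminus \Gamma(v)) \cap S$ and $B_S := (\Gamma(v) \setminus \Gamma(u)) \cap S$, one has $X_S = |L(k,i) \cap A_S| - |L(k,i) \cap B_S|$, and similarly for $X_T$ with the analogous sets $A_T, B_T \subset T$.

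Next, each of the four counts $|L(k,i) \cap A_S|$, $|L(k,i) \cap B_S|$, $|L(k,i) \cap A_T|$, $|L(k,i) \cap B_T|$ is a hypergeometric random variable with sample size at most $n$. Applying Proposition~\ref{hypergeom} to each with absolute deviation $n^{1/2+\delta}/4$ gives a tail bound of $2\exp(-n^{2\delta}/8)$ for each. Since $|X_S - \EV[X_S]| < n^{1/2+\delta}/2$ whenever both of its hypergeometric summands are within $n^{1/2+\delta}/4$ of their means, a union bound yields
\[\PV\bigl(|X_S - \EV[X_S]| \ge n^{1/2+\delta}/2\bigr) \le 4\exp(-n^{2\delta}/8),\]
and the same bound holds for $X_T$.

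Finally, on the event $\{\wdeg_{k,i}(u) = \wdeg_{k,i}(v)\}$ the constants $\omega(u) - \omega(v)$ cancel, so $X_S + X_T$ deviates from $\EV[X_S+X_T]$ by at least $|\mu_{k,i}(u) - \mu_{k,i}(v)| \ge n^{1/2+\delta}$; by the triangle inequality, at least one of $X_S, X_T$ must deviate from its own mean by at least $n^{1/2+\delta}/2$. One more union bound then gives
\[\PV(\wdeg_{k,i}(u) = \wdeg_{k,i}(v)) \le 8\exp(-n^{2\delta}/8) \le 4\exp(-n^{\delta})\]
for all sufficiently large $n$. There is no serious obstacle here; the only mild subtlety is that $|L(k,i) \cap A_S|$ and $|L(k,i) \cap B_S|$ are negatively correlated (sampling without replacement), which is why I treat them separately and pay a factor of two in the union bound rather than trying to concentrate $X_S$ directly.
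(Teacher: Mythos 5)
Your proposal is correct and follows essentially the same route as the paper: compatibility forces a deviation of at least $n^{1/2+\delta}$ somewhere, which you convert via a union bound into tail events for hypergeometric counts of size-$n^{1/2+\delta}/4$ deviations, bounded by Proposition~\ref{hypergeom}, exactly as in the paper's argument. The only (cosmetic) difference is bookkeeping: the paper concentrates $\wdeg_{k,i}(u)$ and $\wdeg_{k,i}(v)$ separately around their means, while you concentrate the difference via the sets $\Gamma(u)\setminus\Gamma(v)$ and $\Gamma(v)\setminus\Gamma(u)$, which yields the same four hypergeometric tail bounds and the same final estimate.
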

		
		\begin{proof} To simplify notation, let $a=\wdeg_{k,i}(u)$ and $b=\wdeg_{k,i}(v)$. Since $(k,i)$ is compatible with $\{u,v\}$, we have $|\EV[a]-\EV[b]|\ge n^{1/2+\delta}.$ Hence,
			\[\PV(a=b)\le\PV\left(|a-\EV[a]| \ge \frac{n^{1/2+\delta}}{2}\right)+\PV\left(|b-\EV[b]| \ge \frac{n^{1/2+\delta}}{2}\right).\]
			Let $a_s=\Deg_{L(k,i)\cap S}(u)$ and $a_t=\Deg_{L(k,i)\cap T}(u)$. Since $a=a_s+a_t+\omega(u)$, we see that
			\[\PV\left(|a-\EV[a]| \ge \frac{n^{1/2+\delta}}{2}\right) \le\PV\left(|a_s-\EV[a_s]| \ge \frac{n^{1/2+\delta}}{4}\right)+\PV\left(|a_t-\EV[a_t]| \ge \frac{n^{1/2+\delta}}{4}\right).\]
			The distribution of $a_t$ is hypergeometric, so it is easy to check using Proposition~\ref{hypergeom} that
			\[\PV\left(|a_t-\EV[a_t]| \ge \frac{n^{1/2+\delta}}{4}\right) < \exp(-n^{\delta}).\]
			Similarly, we also have $\PV(|a_s-\EV[ a_s]|>n^{1/2+\delta}/4)<\exp(-n^{\delta})$. Therefore,
			\[\PV\left(|a-\EV[a]| \ge \frac{n^{1/2+\delta}}{2}\right)<2\exp(-n^{\delta}).\]
			It is clear that we also have $\PV(|b-\EV [b]|\ge n^{1/2+\delta}/2)<2\exp(-n^{\delta})$. We conclude that $P(a=b)<4\exp(-n^{\delta})$, as claimed.
		\end{proof}
		
		We say that a pair $\{u,v\} \subset Z$ is \emph{good} if $|\Gamma_{X}(u)\triangle \Gamma_{X}(v)|\ge cn$, and \emph{bad} otherwise. The next claim handles the case where $(k,i)$ is incompatible with $\{u,v\}$.
		\begin{claim}\label{subclaim2}
			If $\{u,v\} \subset Z$ is a good pair and $(k,i) \in \C{I}$ is incompatible with $\{u,v\}$, then
			\[\PV(\wdeg_{k,i}(u)=\wdeg_{k,i}(v))<c'/n^{1/2},\]
			where $c'>0$ is a constant depending on $C$ and $\delta$ alone.
		\end{claim}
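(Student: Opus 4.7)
The plan is to reduce the bound to Proposition~\ref{bineq} via an independence argument. Set $D_S := \Deg_{L(k,i) \cap S}(u) - \Deg_{L(k,i) \cap S}(v)$ and $D_T := \Deg_{L(k,i) \cap T}(u) - \Deg_{L(k,i) \cap T}(v)$, so that the event $\wdeg_{k,i}(u) = \wdeg_{k,i}(v)$ is just $\{D_S + D_T = \omega(v) - \omega(u)\}$. Since $L(k,i) \cap S$ and $L(k,i) \cap T$ are independent uniformly random subsets of $S$ and $T$ of sizes $k-i$ and $i$ respectively, $D_S$ and $D_T$ are independent; writing the target probability as a convolution and pulling out the max gives
\[\PV\bigl(\wdeg_{k,i}(u) = \wdeg_{k,i}(v)\bigr) \le \max_{r \in \Z} \PV(D_S = r).\]
It therefore suffices to produce an $O(n^{-1/2})$ pointwise bound either on $D_S$ or (symmetrically) on $D_T$.

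To do this, I would use the goodness of $\{u,v\}$ to pick out the side on which the neighbourhoods of $u$ and $v$ genuinely differ. Since $X = S \cup T \cup X'$ with $|X'| = cn/2$, the bound $|\Gamma_X(u) \triangle \Gamma_X(v)| \ge cn$ forces $|\Gamma_S(u) \triangle \Gamma_S(v)| \ge cn/4$ or $|\Gamma_T(u) \triangle \Gamma_T(v)| \ge cn/4$; say the former, and let $A_1 = \Gamma_S(u) \setminus \Gamma_S(v)$ and $A_2 = \Gamma_S(v) \setminus \Gamma_S(u)$, relabelled so that $|A_2| \ge cn/8$. I would then invoke Proposition~\ref{bineq} on the universe $S$ (of size $m$), the disjoint pair $(A_2, A_1)$, and the random $(k-i)$-subset $U = L(k,i) \cap S$; this gives
\[\max_{r \in \Z} \PV(D_S = r) = \max_{r \in \Z} \PV(|U \cap A_1| - |U \cap A_2| = r) \le c''/\sqrt{m} \le c'/\sqrt{n},\]
which is the desired estimate.

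Verifying the hypotheses of Proposition~\ref{bineq} is a straightforward check once one unpacks how $Z$, $S$ and $T$ were defined. Since $u, v \in Z \subset Y''$ are of type $0$, we have $\Deg_S(v) \le m - cn/8$, so $|A_2| \le m - cn/8$; together with $|A_2| \ge cn/8$ and $cn/4 \le m \le n/2$ this forces $|A_2|/m \in [c/4,\,1-c/4]$. Similarly, $k - i \in [cn/100,\,2cn/25]$ (from the ranges $k \in [cn/20, cn/10]$ and $i \in [cn/50, cn/25]$ defining $\C{I}$) gives $(k-i)/m \in [c/50,\,8/25]$, so the random subset is of size $\Theta(m)$ and at most $m/2$. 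The case $|\Gamma_T(u) \triangle \Gamma_T(v)| \ge cn/4$ is symmetric, with $T$ and $i$ replacing $S$ and $k-i$ (the required verification $i/m \in [c/50,\,1/2]$ is immediate from the same ranges). The only genuine obstacle is spotting the initial independence step; without it one would be reduced to bounding the anticoncentration of an awkward sum of two dependent hypergeometrics, whereas thanks to independence the whole task collapses to a single application of Proposition~\ref{bineq}.
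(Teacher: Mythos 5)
Your reduction is, in outline, exactly the paper's: use the independence of $L(k,i)\cap S$ and $L(k,i)\cap T$, bound the collision probability by a convolution and hence by $\max_{r}\PV(D_S=r)$ or $\max_{r}\PV(D_T=r)$, and then invoke Proposition~\ref{bineq} on the side where $\Gamma_X(u)$ and $\Gamma_X(v)$ genuinely differ. The gap is in your verification of the hypotheses of Proposition~\ref{bineq}. You justify the upper bound $|A_2|\le m-cn/8$ by asserting that ``$u,v\in Z\subset Y''$ are of type $0$''. That is not given: $Y''$ was only chosen so that all of its vertices share \emph{some} common type, which may be any of the types $0$ through $4$, and $Z$ inherits whatever that type is. The two-sided condition $\delta m\le |A_2|\le(1-\delta)m$ is not cosmetic here: if the set playing the role of $A$ nearly fills the universe while the other difference set and the residual set are both tiny, then $|U\cap B|-|U\cap A|$ is essentially deterministic and no $O(n^{-1/2})$ anticoncentration bound can hold, which is precisely why Proposition~\ref{bineq} demands the upper bound. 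Your ``say the former'' also hides a correlation you cannot ignore: which of the two sides can carry a symmetric difference of size $cn/4$ is dictated by the common type (for type $1$ or $2$ one has $|\Gamma_S(u)\triangle\Gamma_S(v)|<cn/4$, for type $3$ or $4$ one has $|\Gamma_T(u)\triangle\Gamma_T(v)|<cn/4$), so the choice of side is not a free symmetry.

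The gap is repairable, and the repair is the case analysis the paper performs. If the common type is $1$ or $2$, goodness forces you onto the $T$ side, and there the needed bound $|B_1|,|B_2|\le m-cn/8$ follows not from being of type $0$ but from $u,v$ being non-problematic (hence not of type $4$); if the common type is $3$ or $4$, you are forced onto the $S$ side, where non-problematicity (not of type $2$) gives $|A_1|,|A_2|\le m-cn/8$; only for type $0$ do both sides come with two-sided bounds as you assumed. (Equivalently, one can argue directly that for two vertices of the same type, $|A_2|>m-cn/8$ would force $\Deg_S(v)>m-cn/8$ and $\Deg_S(u)<cn/8$, i.e.\ types $2$ and $1$ respectively, a contradiction; but either way the justification must come from the shared type and non-problematicity, not from type $0$.) With that case analysis inserted, your remaining checks --- $cn/4\le m\le n/2$, $(k-i)/m$ and $i/m$ bounded away from $0$ and at most $1/2$, and $|A_2|/m$ (resp.\ $|B_2|/m$) in $[c/8\cdot 2/n\cdot m,\,1-c/4]$-type ranges --- are correct, and the claim follows as you describe. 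Contrary to your closing remark, the independence step is the easy part; the real content you skipped is this type bookkeeping.
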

		
		\begin{proof}Let $P=L(k,i)\cap S$ and $Q=L(k,i)\cap T$. Clearly, $P$ is uniformly distributed on the $(k-i)$-element subsets of $S$ and similarly, $Q$ is uniformly distributed on the $i$-element subsets of $T$; furthermore, $P$ and $Q$ are independent.
			
			To prove the claim, it will be convenient to define the sets $A_{1}=\Gamma_{S}(u)\setminus \Gamma_{S}(v)$, $A_{2}=\Gamma_{S}(v)\setminus \Gamma_{S}(u)$, $B_{1}=\Gamma_{T}(u)\setminus \Gamma_{T}(v)$ and $B_{2}=\Gamma_{T}(v)\setminus \Gamma_{T}(u)$. Also, let $\alpha_{j}=|A_{j}\cap P|$ and $\beta_{j}=|B_{j}\cap Q|$ for $j \in \{1,2\}$.
			
			Observe that \[\wdeg_{k,i}(u)-\wdeg_{k,i}(v)=\alpha_{1}+\beta_{1}-\alpha_{2}-\beta_{2}+ \Delta,\]
			where $\Delta=\omega(u)-\omega(v)$. Hence, $\wdeg_{k,i}(u)=\wdeg_{k,i}(v)$ if and only if $\alpha_{1}-\alpha_{2}+\Delta=\beta_{2}-\beta_{1}$, so it consequently follows that
			\begin{align*}
				\PV(\wdeg_{k,i}(u)=\wdeg_{k,i}(v))&=\sum_{r\in \mathbb{Z}}\PV(\alpha_{1}-\alpha_{2}+ \Delta=\beta_{2}-\beta_{1}=r)\\
				&=\sum_{r\in\mathbb{Z}}\PV(\alpha_{1}-\alpha_{2}=r - \Delta)\PV(\beta_{2}-\beta_{1}=r)\\
				&\le\min\left\{\max_{r\in \mathbb{Z}}\left\{\PV(\alpha_{2}-\alpha_{1}=r)\right\}, \max_{r\in \mathbb{Z}}\left\{\PV(\beta_{2}-\beta_{1}=r)\right\}\right\}.
			\end{align*}
			
			Suppose first that $u$ and $v$ are both either of type~1 or~2. In this case, we would like to estimate $\max_{r\in \mathbb{Z}}\{\PV(\beta_{2}-\beta_{1}=r)\}$ using Proposition~\ref{bineq}. Recall that $m \ge cn/4$ and that $i \ge cn/50$. Therefore, to apply Proposition~\ref{bineq}, it suffices to show that either $b_1/m$ or $b_2/m$ is bounded away from both $0$ and $1$, where $b_j=|B_j|$ for $j \in \{1,2\}$. As $\{u,v\} \subset Z$ is a good pair and both these vertices are either of type~1 or~2, we have
			\begin{align*}
				cn &\le |\Gamma_{X}(u)\triangle \Gamma_{X}(v)|\\
				&=|\Gamma_{S}(u)\triangle \Gamma_{S}(v)|+|\Gamma_{T}(u)\triangle \Gamma_{T}(v)|+|\Gamma_{X'}(u)\triangle \Gamma_{X'}(v)|\\
				&<cn/4+|\Gamma_{T}(u)\triangle \Gamma_{T}(v)|+cn/2\\
				&=3cn/4+b_1+b_2,
			\end{align*}
			so either $b_1$ or $b_2$ is at least $cn/8$. Also, both $|\Gamma_{T}(u)|$ and $|\Gamma_{T}(v)|$ are at most $m-cn/8$ as $u$ and $v$ are both non-problematic vertices (and hence not of type~4). Therefore, both $b_1$ and $b_2$ are at most $m-cn/8$. Consequently, at least one of $b_1$ or $b_2$ lies between $cn/8$ and $m-cn/8$. The claim now follows in this case as an easy consequence of Proposition~\ref{bineq}. 
			
			The case where $u$ and $v$ are both either of type~3 or~4 follows analogously by estimating $\max_{r\in \mathbb{Z}}\{\PV(\alpha_{2}-\alpha_{1}=r)\}$ instead; this may be done using the same argument as above, but with the roles of $S$ and $T$ interchanged.
			
			Finally, the case where $u$ and $v$ are both of type~0 may be addressed as follows. It follows from the definition of a type~0 vertex that all of $|A_1|/m$, $|A_2|/m$, $|B_1|/m$ and $|B_2|/m$ are bounded away from $1$, and furthermore, one of these quantities is bounded away from $0$ since $\{u,v\}$ is a good pair; the result then follows by bounding one of $\max_{r\in \mathbb{Z}}\{\PV(\alpha_{2}-\alpha_{1}=r)\}$ or $ \max_{r\in \mathbb{Z}}\{\PV(\beta_{2}-\beta_{1}=r)\}$, as appropriate.
		\end{proof}
		
		We are now ready to prove Claim~\ref{important}.
		\begin{proof}[Proof of Claim~\ref{important}]
			As we remarked earlier, we have
			\[ \sum_{(k,i)\in\C{I}}\EV[e(D_{k,i})]=\sum_{(k,i)\in\C{I}}\sum_{\{u,v\}\subset Z}\PV(\wdeg_{k,i}(u)=\wdeg_{k,i}(v)).\]
			To bound this sum, we decompose it into three parts. First, let $\C{S}_1$ denote the family of all pairs $((k,i),\{u,v\})$ with $(k,i) \in \C{I}$ and $\{u,v\} \subset Z$ such that $\{u,v\}$ is good and $(k,i)$ is compatible with $\{u, v\}$. Next, let $\C{S}_2$ denote the family of all pairs $((k,i),\{u,v\})$ with $(k,i) \in \C{I}$ and $\{u,v\} \subset Z$ such that $\{u,v\}$ is good and $(k,i)$ is incompatible with $\{u, v\}$. Finally, let $\C{S}_3$ denote the family of all pairs $((k,i),\{u,v\})$ with $(k,i) \in \C{I}$ and $\{u,v\} \subset Z$ such that $\{u,v\}$ is bad. We may then write
			\[\sum_{(k,i)\in\C{I}}\EV[e(D_{k,i})]= S_1 + S_2 +S_3, \]
			where
			\[S_{j}=\sum_{\C{S}_j}\PV(\wdeg_{k,i}(u)=\wdeg_{k,i}(v))\]
			for $1 \le j \le 3$.
			
			First, by Claim~\ref{subclaim1}, each term in the sum defining  $S_{1}$ is at most $4\exp(-n^{\delta})$. As this sum consists of at most $n^{4}$ terms, we have $S_{1}<4n^{4}\exp(-n^{\delta})<1$.
			
			Next, by Claim~\ref{totalbadpairs}, the sum defining $S_{2}$ consists of at most $10n^{7/2+\delta}\log n$ terms and by Claim~\ref{subclaim2}, each of these terms is at most $c'/n^{1/2}$. Consequently, we have $S_{2}<10c'n^{3+\delta}\log n$.
			
			Finally, recall that by Lemma~\ref{bipartitediverse}, the number of bad pairs $\{u,v\} \subset Z$ is at most $n^{1+\delta}$, so the sum defining $S_{3}$ consists of at most $n^{3+\delta}$ terms. Therefore, $S_{3}<n^{3+\delta}$.
			
			Putting everything together, we see that
			\[\sum_{(k,i)\in\C{I}}\EV[e(D_{k,i})] \le  1+10c'n^{3+\delta}\log n+ n^{3+\delta} < n^{3+2\delta}. \qedhere \]
		\end{proof}
		
		To finish the proof, note that by Claim~\ref{important}, we have
		\[\sum_{(k,i)\in\C{I}}e(D_{k,i})<n^{3+2\delta}\]
		with positive probability. By Claim~\ref{lowerbound},
		\[\Psi(G,\omega)\ge\sum_{(k,i)\in\C{I}}N(k,i),\]
		so by~(\ref{equ1}), with positive probability, we have
		\[\sum_{(k,i)\in\C{I}}N(k,i)>10^{-8}c^{6}n^{6-2\delta}\left(n^3 + n^{3+2\delta}\right)^{-1}>n^{3-\eps}\]
		provided $\eps > 4\delta$, proving Theorem~\ref{secondstep}.
	\end{proof}
	
	We are now in a position to prove Theorem~\ref{mainthm}, our main result.
	
	\begin{proof}[Proof of Theorem~\ref{mainthm}]
		Without loss of generality, suppose that $0 < \eps<1$ and fix $\eps_{0} = \eps/ 100$. All inequalities in the sequel will hold provided $n$ is sufficiently large.
		
		By Theorem~\ref{firststep}, there exist constants $c_{1}, c_2>0$ depending on $C$ and $\eps$ alone such that the following holds for all sufficiently large $n \in \N$. Writing $s = c_{1}n^{1/2}/5$, for each integer $1 \le i \le s$, there exist disjoint sets $U_{i}\subset V$ and $W'_{i}\subset V$ such that
		\begin{enumerate}
			\item $c_{1}n^{2}+(5i-2)n^{3/2} \le e(G[U_{i}]) \le c_{1}n^{2}+(5i+2)n^{3/2}$,
			\item $|W'_{i}| \ge n^{1/2-\eps_{0}}$,
			\item there exists a positive real number $l_{i} \ge c_{2}n$ such that $l_{i} \le \Deg_{U_{i}}(x) \le l_{i}+n^{1/2+\eps_{0}}$ for all $x\in W'_{i}$, and
			\item the degrees $\Deg_{U_{i}}(x)$ are distinct for all $x\in W'_{i}$.
		\end{enumerate}
		
		For each $1 \le i \le s$, choose a subset $W_{i} \subset W'_i$ of size exactly $c_{2}n^{1/2-\eps_{0}}/2$. To prove the result, we shall only consider subgraphs induced by sets of the form $U_{i}\cup Z$, where $Z\subset W_{i}$. The following two facts will prove useful.
		
		\begin{enumerate}[(A)]
			\item\label{propA} If $i < j$, then $e(G[U_{i}\cup Z_1])<e(G[U_{j}\cup Z_2])$ for all $Z_1 \subset W_{i}$ and $Z_2\subset W_{j}$. To see this, note that
			\begin{align*}
				e(G[U_{i}\cup Z_1]) & \le e(G[U_{i}])+n|Z_1| < c_{1}n^{2}+(5i+2)n^{3/2}+n^{3/2}\\
				&\le e(G[U_{j}])\le e(G[U_{j}\cup Z_2]).
			\end{align*}
			
			\item\label{propB} If $Z_1,Z_2\subset W_{i}$ are subsets satisfying $|Z_1|<|Z_2|$, then $e(G[U_{i}\cup Z_1])<e(G[U_{i}\cup Z_2])$. To see this, note that
			\[e(G[U_{i}\cup Z_1]) \le e(G[U_{i}])+(l_{i}+n^{1/2+\eps_{0}})|Z_1|+|Z_1|^{2}\]
			and that
			\[e(G[U_{i}\cup Z_2])\ge e(G[U_{i}])+l_{i}|Z_2|;\]
			it follows that
			\begin{align*}
				e(G[U_{i}\cup Z_2])-e(G[U_{i}\cup Z_1]) &\ge l_{i}-|Z_1|n^{1/2+\eps_{0}}-|Z_1|^{2}\\
				&\ge c_{2}n-c_{2}n/2-n^{1-2\eps_{0}}>0.
			\end{align*}
		\end{enumerate}
		
		Since $G$ is $C$-Ramsey, the largest clique or independent set in any induced subgraph of $G$ has size at most $C\log n$. As $|W_{i}|=c_{2}n^{1/2-\eps_{0}}/2$, this implies that $G[W_{i}]$ is $C_{0}$-Ramsey for each $1 \le i \le s$, where $C_{0}=2C/(1-3\eps_{0})$. Define a weight function $\omega_{i}: W_{i}\rightarrow \mathbb{N}\cup \{0\}$ by $\omega_{i}(v)=\Deg_{U_{i}}(v)-l_{i}$. It is clear that
		\[e(G[U_{i}\cup Z])=e(G[U_{i}])+l_{i}|Z| +e^{\omega_{i}}(G[Z]),\]
		so, by~\eqref{propB}, the set $\{e(G[U_{i}\cup Z]):Z\subset W_{i}\}$ has exactly $|\Psi(G[W_{i}],\omega_{i})|$ elements.
		Clearly, $\omega_{i}$ is injective; furthermore, we also have
		\[\omega_{i}(v)\le n^{1/2+\eps_{0}}\le|W_{i}|^{1+10\eps_{0}}\] for all $v \in W_i$. By applying Theorem~\ref{secondstep} to $G[W_{i}]$ (with parameters $C=C_{0}$, $\omega=\omega_{i}$, $\delta=10\eps_{0}$ and $\eps=50\eps_{0}$), we see that
		\[|\Psi(G[W_{i}],\omega_{i})| \ge |W_{i}|^{3-50\eps_{0}} \ge(c_{2}/2)^{3}n^{3/2-60\eps_{0}}.\]
		
		The result now follows from~\eqref{propA} because
		\[\Phi(G)\ge \sum_{i=1}^{s} |\Psi(G[W_{i}],\omega_{i})| \ge c_{1}(c_{2}/2)^{3}n^{2-60\eps_{0}}/5 \ge n^{2-\eps}. \qedhere\]
	\end{proof}
	\section{Conclusion}\label{conc}
	Many of the currently known properties of Ramsey graphs have been obtained by first showing that Ramsey graphs satisfy certain `quasirandomness' conditions and then demonstrating the property in question for all graphs satisfying those conditions. In this spirit, we believe that an analogue of Theorem~\ref{mainthm} should hold for all graphs whose edges are reasonably well-distributed. An $n$-vertex graph is said to be \emph{uniformly $\eps$-dense} if the edge density of any induced subgraph on at least $n^\eps$ vertices lies between $\eps$ and $1-\eps$. It is easily seen from Theorem~\ref{density} that Ramsey graphs are uniformly dense. We conjecture the following strengthening of Theorem~\ref{mainthm}
	
	\begin{conjecture}
		For any fixed $\eps >0$, if $G$ is a uniformly $\eps$-dense graph on $n$ vertices, then $|\Phi(G)| = n^{2-o(1)}$.
	\end{conjecture}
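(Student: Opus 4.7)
The plan is to mimic the three-step proof of Theorem~\ref{mainthm}, tracking each invocation of the Ramsey hypothesis and replacing it by an appeal to uniform $\eps$-density. The analogue of the density bound (Theorem~\ref{density}) comes for free: by definition every sufficiently large induced subgraph of a uniformly $\eps$-dense graph has edge density in $[\eps,1-\eps]$. Uniform density is also hereditary in a useful sense: any induced subgraph on at least $n^{2\eps}$ vertices of a uniformly $\eps$-dense graph on $n$ vertices is itself uniformly $\eps'$-dense for a slightly worse $\eps' = \eps'(\eps)$, playing the role of the easy remark that induced subgraphs of $C$-Ramsey graphs are $2C$-Ramsey.

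The central obstacle is to establish an analogue of Theorem~\ref{diversity}: every sufficiently large uniformly $\eps$-dense graph should contain a $(c,\delta)$-diverse induced subgraph on a constant fraction of its vertices, for some $c = c(\eps,\delta) > 0$. The Bukh--Sudakov proof~\citep{diffdegrees} for Ramsey graphs proceeds by an iterative clustering argument: if too many pairs of vertices have small pairwise symmetric differences, one can eventually build a large homogeneous set. In our setting one would instead need to derive a violation of uniform density, which is delicate because uniform $\eps$-density only controls edge counts of large induced subgraphs rather than structural properties like neighbourhood overlap. A natural attempt is to argue that a large cluster of vertices with near-identical neighbourhoods forces some sizeable induced subgraph to have edge density very close to $0$ or $1$: if $y_1,\dots,y_t$ all share essentially the same neighbourhood $N$, then the bipartite graph between $\{y_1,\dots,y_t\}$ and $N$ is nearly complete and that between $\{y_1,\dots,y_t\}$ and $V \setminus N$ is nearly empty, and an averaging step should produce a subset on which the induced density is pushed to an extreme, contradicting uniform $\eps$-density.

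Once this diversity analogue is in hand, the bipartite version (Lemma~\ref{bipartitediverse}) follows from the very same random partitioning argument, with no further use of the Ramsey hypothesis. The proof of Theorem~\ref{firststep} then transfers essentially verbatim, since its only appeals to the Ramsey property are through Theorems~\ref{density} and~\ref{diversity}. The proof of Theorem~\ref{secondstep} likewise invokes the Ramsey property only to apply bipartite diversity, and the final combination step in the proof of Theorem~\ref{mainthm} is purely combinatorial and transfers unchanged. The hard part is therefore isolated in the diversity question: once that is proved in the uniformly dense regime, the rest of the machinery of this paper runs with only cosmetic modifications, and one obtains the desired bound $|\Phi(G)| \ge n^{2-o(1)}$.
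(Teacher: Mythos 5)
The statement you are addressing is one of the paper's concluding conjectures: the paper offers no proof of it and explicitly leaves it open, so there is no ``paper proof'' to compare against, and your proposal should be judged as an attempt on an open problem. As such it is not a proof but a reduction: you correctly observe that the density input (Theorem~\ref{density}) is free and that the only other use of the Ramsey hypothesis is through diversity (Theorem~\ref{diversity} and its bipartite variant, Lemma~\ref{bipartitediverse}), so the whole difficulty is concentrated in proving a diversity analogue for uniformly dense graphs. That is a sensible way to isolate the problem, but you do not prove the key lemma, and the sketch you offer for it does not work. Failure of $(c,\delta)$-diversity only supplies, for some vertex, about $n^{\delta}$ vertices whose neighbourhoods nearly coincide with its own; a near-twin cluster $\{y_1,\dots,y_t\}$ with $t\le n^{\delta}$ (or indeed any $t=n^{o(1)}$) creates no tension with uniform $\eps$-density, because uniform density only constrains induced subgraphs on at least $n^{\eps}$ vertices, and in any such subgraph the edges meeting the small cluster are a vanishing fraction of all pairs: the density of $G[\{y_1,\dots,y_t\}\cup A]$ is governed by $G[A]$, about which nothing extreme is known. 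To push a density to $0$ or $1$ by your averaging argument you would need a near-twin cluster of size comparable to $n^{\eps}$ or larger, which the negation of diversity simply does not give you. The Bukh--Sudakov argument escapes this because it uses the Ramsey hypothesis structurally (many pairs with small symmetric difference are parlayed into large homogeneous-type configurations), and homogeneous sets of size $n^{o(1)}$ are perfectly compatible with uniform $\eps$-density; so a genuinely new idea is needed at this point, and your proposal leaves the conjecture open.

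Two secondary points on the reduction itself. First, your heredity remark is not correct with the paper's one-parameter definition: if $H$ is an induced subgraph of $G$ on $N\approx n^{2\eps}$ vertices, then ``uniformly $\eps'$-dense for $H$'' speaks about subgraphs of $H$ on $N^{\eps'}=n^{2\eps\eps'}$ vertices, which for $\eps'<1/2$ is far below the threshold $n^{\eps}$ controlled by the hypothesis on $G$; the fix is to work throughout with a two-parameter notion (subgraphs on at least $n^{\delta}$ vertices have density in $[\eps,1-\eps]$), which is genuinely hereditary after adjusting the exponent, and this is also what you need when applying the analogue of Theorem~\ref{secondstep} to the sets $W_i$ of size about $n^{1/2-\eps_0}$ in the final combination step. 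Second, even granting a diversity analogue, one should check that the constant $c$ obtained is allowed to depend on $\delta$ in the same way as in Theorem~\ref{diversity}, since the proofs of Theorems~\ref{firststep} and~\ref{secondstep} use the freedom to take $\delta$ small relative to the target exponent. These are repairable, but the diversity lemma itself is the genuine gap.
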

	
	We have shown that $|\Phi(G)| = n^{2-o(1)}$ for any Ramsey graph $G$ on $n$ vertices. Using effective versions of Theorems~\ref{density} and~\ref{diversity}, it is in fact possible to read out a lower bound of the form $n^2/\omega(n)$ from our proof, where  $\omega(n) = \exp(\Theta((\log n)^{1/2}))$ is a slowly growing error term. Let us mention, for the sake of the reader interested in the specifics of the aforementioned estimate, that the main bottleneck in our argument is Theorem~\ref{diversity}; improved bounds for this result should immediately translate into better estimates for the error term in the main result proved here. We naturally believe that our result should hold with a suitable positive constant in the place of the error term $\omega$, and it remains an interesting open problem to prove such a statement; we suspect a better understanding of the large-scale structure of Ramsey graphs will be required to settle this question.
	\bibliographystyle{amsplain}
	\bibliography{ramsey_sizes}
	
\end{document}